\newcommand\nat{\mathbb N}
\newcommand\zz{\mathbb Z}
\newcommand\rr{\mathbb R}
\newcommand{\mg}[1]{{#1}^{\times}}
\newcommand{\sq}[1]{{#1}^{\times 2}}
\newcommand{\sums}[1]{\left(\mbox{$\mathsf{\Sigma}{#1}^{2}$}\right)^\times}
\newcommand{\mfm}{\mathfrak{m}}
\newcommand{\s}{\sigma}
\newcommand\mc\mathcal
\newtheorem{thm}{Theorem}[section]
\newtheorem{lem}[thm]{Lemma}
\newtheorem{prop}[thm]{Proposition}
\newtheorem{cor}[thm]{Corollary}
\theoremstyle{definition}
\newtheorem{ex}[thm]{Example}
\newtheorem{rem}[thm]{Remark}
\numberwithin{equation}{thm}
\newcommand{\ovl}{\overline}
\newcommand{\car}{\mathsf{char}}
\newcommand{\half}{\hbox{$\frac{1}2$}}
\renewcommand{\min}{\mathsf{min}}
\renewcommand{\max}{\mathsf{max}}
\renewcommand{\setminus}{\smallsetminus}
\renewcommand{\leq}{\leqslant}
\renewcommand{\geq}{\geqslant}
\newcommand{\gen}{\mathfrak{g}}
\DeclareMathOperator{\Spec}{\mathsf{Spec}}
\renewcommand{\log}{\mathsf{log}}
\renewcommand{\deg}{\mathsf{deg}}
\newcommand{\G}{\mathsf{G}}
\newcommand{\alg}{\mathsf{alg}}
\newcommand{\sep}{\mathsf{sep}}
\newcommand{\per}{\mathsf{per}}
\renewcommand{\dim}{\mathsf{dim}}
\renewcommand{\cref}{\Cref}
\title{Nonsplit conics in the reduction of an arithmetic curve}
\date{26.11.2023}
\author{Karim Johannes Becher}
\address{University of Antwerp, Department of Mathematics, Middelheimlaan 1, 2020 Ant\-werp, Belgium}
\email{karimjohannes.becher@uantwerpen.be}
\author{David Grimm}
\address{Universidad de Santiago de Chile, Departamento de Matem\'atica y Ciencia de la Computaci\'on, Av. Libertador Bernardo O' Higgins 3363, Santiago de Chile, Chile}
\email{david.grimm@usach.cl}
\thanks{This work was supported by the \textit{FWO Odysseus Programme} (project G0E6114N, \textit{Explicit Methods in Quadratic Form Theory}), funded by the \textit{Fonds Wetenschappelijk Onderzoek -- Vlaanderen},  by \textit{ANID} (proyecto FONDECYT 11150956) 
and by the \textit{Universidad de Santiago de Chile} (USACH proyecto DICYT, Codigo 041933G)}
\begin{document}
\begin{abstract}\noindent
For a function field in one variable $F/K$ and a discrete valuation $v$ of $K$ with perfect residue field $k$, we bound the number of discrete valuations on $F$ extending $v$ whose residue fields are non-ruled function fields in one variable  over $k$.
Assuming that $K$ is relatively algebraically closed in $F$, we find that the number of non-ruled residually transcendental extensions of $v$ to $F$ is bounded by $\gen+1$ where $\gen$ is the genus of $F/K$. 
An application to sums of squares in function fields of curves over $\rr(\!(t)\!)$ is outlined.

\medskip

\noindent
\textit{Keywords:}  valuation, residue field extension, function field in one variable, genus, ruled residue theorem, minimal arithmetic surface, reduction theory, sums of squares
\medskip

\noindent
\textit{Classification (MSC 2020):} 12D15, 12J10, 12J25, 14H05, 14H25
\end{abstract}

\maketitle

%%%%%%%%%%%%%%
\section{Introduction} %%%
%%%%%%%%%%%%%%

A finitely generated field extension of transcendence degree one is called a \textit{function field in one variable}.
Let $K$ be a field and let $v_0$ be a non-trivial valuation on $K$ whose residue field we denote by $k$.
Let $F/K$ be a function field in one variable, $v$ a valuation on $F$ extending $v_0$ and $\kappa_v$ the residue field of $v$.
We call $v$ \textit{residually transcendental over $K$} if the residue field extension $\kappa_v/k$ is transcendental; this implies that the valuation is non-trivial on $K$ and 
that $\kappa_v/k$ is a function field in one variable.

The function field $F/K$ is called \textit{ruled} if $F$ is a rational function field over a finite field extension of $K$.
Let $v$ be a residually transcendental extension of $v_0$ to $F$. We say that $v$  is \textit{ruled} (\textit{over $K$}) if the residue field extension $\kappa_v/k$ is ruled. 
The Ruled Residue Theorem by J.~Ohm \cite{Ohm} asserts that, if $F/K$ is ruled, then every residually transcendental extension of $v_0$ to $F$ is ruled.
(This answered a question by Nagata in~\cite{Nag}, where the statement was obtained for the special case  where $v_0$ can be obtained as a composition of discrete valuations.)
More generally one may ask for a description of the set of non-ruled residually transcendental extensions of $v_0$ to $F$. 
In particular one may ask whether this set is finite.

Recent work has revealed for a few special types of function fields in one variable $F/K$ that a valuation on $K$ with good residue characteristic admits at most one non-ruled residually transcendental extension to $F$: 
This was obtained in \cite{BeGu19} for function fields of conics, in \cite{BeGuMi23} for function fields of elliptic curves and in \cite[Cor.~1.3]{Dut23} for function fields of Fermat type of prime degree.

This motivates the question whether there is a general bound on the number of such valuation extensions in terms of the genus of $F/K$.
A partial positive answer to this question follows from a genus inequality, which was  proven  in \cite{Mathieu} for the case where $v_0$ is discrete
and then refined and extended to the general case in \cite{Mat87} and \cite{GMP89}.
This inequality gives a finite upper bound in terms of the genus of $F/K$ on the number of residually transcendental extensions of $v_0$ to $F$ whose residue field extensions $\kappa_v/k$ have positive genus.
Therefore the main problem is to determine those residually transcendental extensions of $v_0$ to $F$ for which the residue field extension $\kappa_v/k$ is of genus zero but not ruled.

In this article we study the case where $v_0$ is a discrete valuation of rank one. 
Our main result, \Cref{T:main-bound}, assumes that the residue field $k$ of $v_0$ is perfect and 
$K$ is relatively algebraically closed in $F$, and it states 
that the number of  non-ruled residually transcendental extensions of $v_0$ is bounded by $\gen(F/K)+1$ where $\gen(F/K)$ is the genus of $F/K$.
We expect that this result can be extended to the case where $v_0$ is an arbitrary valuation on $K$ and where $k$ is not necessarily perfect, but 
a proof of such a generalisation would certainly require different tools.
\smallskip

The research leading to the main result of this article was inspired by a question about sums of squares in the function field $F$ of a curve $C$
over the field $K=\rr(\!(t)\!)$.
Let $\sums{F}$ denote the subgroup of $\mg{F}$ consisting of the nonzero elements of $F$ which are sums of squares in $F$. Let further $\mg{(F^2+F^2)}$ denote the subgroup  of $\sums{F}$ given by the elements which are sums of two squares in $F$.
If $C=\mathbb{P}^1_K$, then 
  we have $\sums{F}=\mg{(F^2+F^2)}$, by a result of E.~Becker \cite[Chap.~III, Theorem~4]{Becker}.
For the curve $C:Y^2=(tX-1)(X^2+1)$ over $K$, it was shown in \cite[(3.11)]{TVGY} that $tX$ is a sum of three but not of two squares in $F$, whereby $\mg{(F^2+F^2)}\subsetneq \sums{F}$.
This observation led in \cite{BVG} to the study of the $2$-elementary abelian  group $$\G(F)=\sums{F}/\mg{(F^2+F^2)}\,.$$
For the case of a hyperelliptic curve $C:Y^2=f(X)$ over $K$ with $f(X)\in K[X]$ square-free, it was proven in \cite[Theorem~3.10]{BVG} that $$\log_2|\G(F)|\leq \hbox{$\frac{1}2$}(\deg(f)+1)\,.$$ 
In \cite[Theorem~6.12]{BGVG} it was shown that $\G(F)$ is finite for any function field in one variable $F/K$ for $K=\rr(\!(t)\!)$. This result is based on \cite[Theorem 6.11]{BGVG}, which yields that $\log_2|\G(F)|$ is equal to the number of $\zz$-valuations $v$ on $F$ such that $-1$ is a sum of two squares but not a square in the residue field of $v$.
Since any such valuation is a non-ruled residually transcendental extension of the $t$-adic valuation on $K=\rr(\!(t)\!)$, our main result (\Cref{T:main-bound}) yields the bound
$$\log_2|\G(F)|\leq \gen(F/K)+1\,.$$
Note that, in the case where $F$ is given by a hyperelliptic curve $Y^2=f(X)$ over $K$ with $f(X)\in K[X]$ square-free,
we have $\gen(F/K)=\lfloor \frac{\deg(f)-1}2\rfloor$, so we recover the above-mentioned bound from \cite[Theorem~3.10]{BVG}.
It follows from \cite[Example 4.5]{ManF23} that this bound is optimal:  For any $g\in\nat$, the function field $F$ of the curve
$Y^2=-\prod_{i=0}^g(X^2+t^{2i})$ is such that $\gen(F/K)=g$ and $\log_2|\G(F)|=g+1$.
(For $g=1$, this is \cite[Example 5.12]{BVG}.)
\medskip

Let us now fix some terminology for valuations on an arbitrary field $F$.
For a valuation $v$ on $F$ we denote by $\mc{O}_v$ the corresponding valuation ring, by $\mfm_v$ the maximal ideal of $\mc{O}_v$, and by $\kappa_v$ its residue field $\mc{O}_v/\mfm_v$.
If $v$ has value group $\zz$, then we call $v$ a \textit{$\zz$-valuation}, and in this case $\mc{O}_v$ is a discrete valuation ring of $F$. Any discrete valuation ring of $F$ is given in this way by a unique $\zz$-valuation on $F$.
We denote by $\Omega(F)$ the set of all $\zz$-valuations on $F$. 

Consider now a discrete valuation ring $T$ with field of fractions $K$ and residue field $k$.
For a function field in one variable $F/K$,
we set
\begin{eqnarray*}
\Omega_T(F) & = & \{v\in \Omega(F)\mid \mc{O}_v\cap K=T\mbox{ and $\kappa_v/k$ is transcendental}\,\}\\
\Omega_T^\ast(F) &=&\{v\in \Omega_T(F)\mid \kappa_v/k\mbox{ is not ruled}\,\}.
\end{eqnarray*}
Under the assumption that $T$ is complete, we showed in \cite[Corollary 3.9]{BGVG} that $\Omega_T^\ast(F)$ is finite.
In the present article, we give an explicit bound in the case where the residue field $k$ is perfect, while dropping the assumption that $T$ is complete.
We show that $$\vert \Omega^*_T(F) \vert\leq \gen(F/K)+1\,$$ 
holds when $K$ is algebraically closed in $F$, and we give sufficient conditions for having strict inequality (\Cref{T:main-bound}). 

Our arguments are based on a reduction to the case where $F$ is the function field of an arithmetic surface $\mc{X}$ over $T$ (which we can assume to be relatively minimal), and on the analysis of the intersection theory for the special fiber $\mc{X}_s$ of this surface.
This relies on the observation in \cite[Proposition 3.7]{BGVG} that any valuation $v\in \Omega^*_T(F)$ is centred in the generic point of an irreducible component of $\mc{X}_s$.
The core of our method lies in the study of an associated intersection graph and the natural action of the absolute Galois group $\mc{G}$ of the residue field $k$ on this graph.
The vertices of this (bipartite) graph are given by the irreducible components of ${\mc X}_s\times_k k_\alg$ and by the intersection points of different components. It turns out that the valuations $v\in\Omega_T^\ast(F)$ that we are particularly looking for, namely those where the residue field extension $\kappa_v/k$ is a non-ruled function field of genus zero, are centred in components $C$ which have a special role under the natural action of $\mc{G}$: the orbit of any vertex adjacent to $C$ under the restricted action of the stabiliser subgroup in $\mc{G}$ has even length.
We refer to such vertices as \textit{pivot vertices}.
In Section~3, we study pivot vertices in a finite connected graph and give upper bounds for their number of pivot vertices and for the number of orbits of pivot vertices, in terms of the Betti number of the graph.

To apply these results, we rely on standard results from the intersection theory of the reduction of curves, as discussed in \cite[Chapters~9-10]{Liu}.
We obtain an upper bound on the Betti number of our intersection graph (which is related to, but different from the graphs discussed in \cite{Liu}) 
in terms of properties of components of the special fiber and their (arithmetic) genera. This is discussed in Section 4.

The  study of the Galois action on the intersection graph could be simplified in the special case where $K=\rr(\!(t)\!)$, because the absolute Galois group of $\rr$ has order $2$. 
In fact, our method is
crucially inspired by an argument which we learned from J.~Van Geel,  treating the case of an elliptic curve over $\rr(\!(t)\!)$ via the symmetry of its Kodaira type under complex conjugation.
\medskip

We would like to express our gratitude to Jan~Van Geel for having inspired this work. We further wish to thank Eva~Bayer-Fluckiger, Jean-Louis~Colliot-Th\'el\`ene, Parul~Gupta, Qing~Liu, Gonzalo~Manzano Flores, Michel Matignon, Adrian~Wadsworth, Marco Zaninelli and to the anonymous referee for various comments and suggestions.

\section{Preliminaries}

In this section we recall basic properties and tools from algebraic and arithmetic geometry, in particular in relation to projective curves and arithmetic surfaces.
We generally follow the terminology from \cite{Liu},  our standard reference.
\medskip

Let $K$ be a field. 
By a \textit{projective curve over $K$} we mean a one-dimensional $K$-scheme of finite type admitting a closed immersion into $\mathbb{P}^n_K$ for some $n\in \nat$.
Let $C$ be a projective curve over $K$.
We denote by $\gen(C/K)$ the \textit{arithmetic genus of $C$ over $K$} as defined in \cite[Definition 7.3.19]{Liu}, namely by
  $$\gen(C/K)=1- \dim_K H^{0}(C, \mc{O}_C)+\dim_K H^1(C, \mc{O}_C)\,,$$
where $\mathcal{O}_C$ denotes the structure sheaf of $C$.
By \cite[Corollary 5.2.27]{Liu} the arithmetic genus is invariant under base change, i.e.~we have $\gen(C\times_KL/L)=\gen(C/K)$ for any field extension $L/K$. 
In the sequel when speaking of the genus of a curve we refer to its arithmetic genus.

For a function field in one variable $F/K$, we denote by $\gen(F/K)$ the genus of $F/K$ as defined in \cite{Deu};
by its definition $\gen(F/K)$ remains the same when $K$ is replaced by the relative algebraic closure of $K$ in $F$.

\begin{prop}\label{P:Deu-gen}
Let $C$ be a geometrically integral projective curve over $K$ and let $F$ denote its function field.
Then $\gen(F/K)\leq\gen(C/K)$, and equality holds if and only if $C$ is regular.
\end{prop}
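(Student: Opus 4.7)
The plan is to compare $C$ with its normalization $\pi\colon \widetilde{C}\to C$, which is a finite birational morphism (finite because $C$ is a one-dimensional excellent scheme and $\pi$ is proper quasi-finite). Since $\pi$ is birational, $\widetilde C$ is an integral regular projective curve over $K$ with the same function field $F$, and since $C$ is geometrically integral, $K$ is algebraically closed in $F$, so $H^0(\widetilde{C},\mathcal{O}_{\widetilde C})=K$ (the field of constants of $\widetilde C$). By the standard identification between Deuring's function field genus and the cohomological genus for a regular projective geometrically integral curve (see e.g.\ Liu, Chap.~7), we have $\gen(\widetilde{C}/K)=\gen(F/K)$.

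Next I would exploit the short exact sequence of coherent $\mathcal{O}_C$-modules
\[
0\longrightarrow \mathcal{O}_C\longrightarrow \pi_\ast\mathcal{O}_{\widetilde{C}}\longrightarrow \mathcal{Q}\longrightarrow 0
\]
where $\mathcal Q=\pi_\ast\mathcal{O}_{\widetilde C}/\mathcal{O}_C$ is a coherent $\mathcal{O}_C$-module supported on the non-normal locus of $C$. Because $\pi$ is finite, $H^i(C,\pi_\ast \mathcal{O}_{\widetilde C})=H^i(\widetilde C,\mathcal{O}_{\widetilde C})$ for all $i$. Since $C$ is geometrically integral we also have $H^0(C,\mathcal{O}_C)=K$. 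The associated long exact sequence in cohomology therefore reads
\[
0\longrightarrow K\longrightarrow K\longrightarrow H^0(C,\mathcal{Q})\longrightarrow H^1(C,\mathcal{O}_C)\longrightarrow H^1(\widetilde C,\mathcal{O}_{\widetilde C})\longrightarrow 0,
\]
which yields
\[
\gen(C/K)-\gen(\widetilde C/K)=\dim_K H^0(C,\mathcal Q)\geq 0,
\]
giving the inequality $\gen(F/K)=\gen(\widetilde C/K)\leq \gen(C/K)$.

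For the equality case, I would argue that $\dim_K H^0(C,\mathcal Q)=0$ forces $\mathcal Q=0$ (as $\mathcal Q$ is a skyscraper sheaf on the closed points of $C$), hence $\pi$ is an isomorphism and $C=\widetilde C$ is normal, and thus regular since $C$ is one-dimensional. Conversely, if $C$ is regular, then $C$ is normal so $\pi$ is an isomorphism and $\mathcal Q=0$, yielding equality of genera.

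The main technical point I would need to verify carefully is the first step: that for a regular projective geometrically integral curve $\widetilde C$ over $K$ one indeed has $\gen(\widetilde C/K)=\gen(F/K)$ in the sense of Deuring. This reduces to checking that both definitions agree with $\dim_K H^1(\widetilde C,\mathcal{O}_{\widetilde C})$, and this is where the assumption that $K$ is algebraically closed in $F$ (i.e., the field of constants equals $K$, so that $H^0=K$) is essential; the rest of the argument is then formal homological algebra.
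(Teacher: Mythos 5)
Your argument is correct and follows the same route as the paper: compare $C$ with its normalization and attribute the genus drop to the torsion quotient $\mathcal{Q}=\pi_\ast\mathcal{O}_{\widetilde C}/\mathcal{O}_C$. The difference is that the paper simply cites Liu, Proposition 7.5.4 for exactly the statement $\gen(C/K)\geq\gen(C'/K)$ with equality iff $C$ is regular, whereas you reprove that proposition via the short exact sequence and long exact cohomology sequence; your unpacking is the content of Liu's proof, so the two are essentially identical in substance. One minor imprecision: you phrase the ``main technical point'' as requiring $\widetilde C$ to be geometrically integral, but all that is actually needed for $H^0(\widetilde C,\mathcal{O}_{\widetilde C})=K$ and for Deuring's genus to agree with $\dim_K H^1(\widetilde C,\mathcal{O}_{\widetilde C})$ is that $\widetilde C$ be an integral regular projective curve whose field of constants is $K$, which follows directly from $K$ being relatively algebraically closed in $F$; geometric integrality of $\widetilde C$ is not required (though it does hold here).
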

\begin{proof}
By the hypothesis on $C$, $K$ is relatively algebraically closed in $F$. We denote by $C'$ the normalization of $C$. Then $C'$ is regular and $\gen(F/K)=\gen(C'/K)$.
Furthermore, by \cite[Proposition 7.5.4]{Liu}, we have $\gen(C/K)\geq\gen(C'/K)$, and equality holds if and only if $C$ is regular.
\end{proof}

By a \textit{conic over $K$} we mean a projective curve over $K$ which is isomorphic to a curve in $\mathbb{P}^2_K$  given by a homogeneous polynomial of degree $2$ over~$K$. 
\begin{ex}\label{E:conics}
Let $f \in K[X,Y]$ be a square-free homogeneous polynomial of degree $2$. This gives rise to two conics in $\mathbb{P}_K^2$, namely:
\begin{enumerate}[$(i)$]
\item The regular conic in $\mathbb{P}_K^2$ given by $f(X,Y) = Z^2$.
\item The singular reduced conic in $\mathbb{P}^2_K$ given by $f(X,Y)=0$. 
\end{enumerate}
Clearly the two conics are non-isomorphic. Hence the two examples are of essentially different type.
Any reduced conic over $K$ is isomorphic to a conic given as in $(i)$ or in $(ii)$ by some square-free homogeneous quadratic polynomial $f\in K[X,Y]$.
\end{ex}

We will need to distinguish between different types of curves of genus zero.

\begin{prop}\label{P:genus0=conic}
Let $C$ be an integral projective curve over  $K$. If $C$ is a conic, then $\gen(C/K)=0$. Conversely, if  $\gen(C/K)=0$, then either $C$ is a regular conic over $K$, or $C$ is singular and birational over $K$ to $\mathbb{P}^1_{L}$ for some proper finite field extension $L/K$.
\end{prop}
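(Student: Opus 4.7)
The forward direction is routine: since $C$ is integral, a conic embedding realises $C$ inside $\mathbb P^2_K$ as the zero locus of an irreducible homogeneous polynomial of degree~$2$, and the short exact sequence $0\to\mathcal O_{\mathbb P^2_K}(-2)\to\mathcal O_{\mathbb P^2_K}\to\mathcal O_C\to 0$ yields $\chi(\mathcal O_C)=1$, hence $\gen(C/K)=0$.

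For the converse, let $\pi:C'\to C$ be the normalisation and set $L=H^0(C',\mathcal O_{C'})$. Since $C'$ is regular and integral, $L$ is a finite field extension of $K$ equal to the algebraic closure of $K$ in $F=K(C)$, and $C'$ is geometrically integral over $L$. Applying \Cref{P:Deu-gen} to $C'$ over $L$ and using $\gen(F/K)=\gen(F/L)$ gives $\gen(C'/L)=\gen(F/K)$. Setting $Q=\pi_*\mathcal O_{C'}/\mathcal O_C$ (supported on the singular locus of $C$), taking $K$-Euler characteristics in $0\to\mathcal O_C\to\pi_*\mathcal O_{C'}\to Q\to 0$ produces
\[
\chi(Q)=\chi(\mathcal O_{C'})-\chi(\mathcal O_C)=[L:K]\bigl(1-\gen(F/K)\bigr)-1.
\]
Since $\chi(Q)\geq 0$ and $\gen(F/K)\in\nat$, this forces $\gen(F/K)=0$ and $\chi(Q)=[L:K]-1$. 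If $C$ is regular, then $C=C'$ and $\chi(Q)=0$, so $[L:K]=1$, $C$ is geometrically integral of genus~$0$, and Riemann--Roch applied to an anticanonical divisor of degree~$2$ embeds $C$ as a plane conic in $\mathbb P^2_K$.

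Assume now that $C$ is singular. Then $[L:K]\geq 2$ and $C'$ is a regular geometrically integral $L$-curve of genus~$0$, hence either $\mathbb P^1_L$ or a non-split regular conic over $L$. The strategy is to rule out the latter by contradiction: if $C'$ were non-split over $L$, each closed point $p'\in C'$ would satisfy $[\kappa(p'):L]\geq 2$ and hence $[\kappa(p'):K]\geq 2[L:K]$. A local analysis of the inclusion $\mathcal O_{C,p}\hookrightarrow \prod_{p'\in\pi^{-1}(p)}\mathcal O_{C',p'}$ at each singular point $p\in C$ should then yield $\dim_K Q_p\geq [L:K]$: the contribution comes either from the residue-field quotient (when $\kappa(p)$ is properly contained in some $\kappa(p')$) or, in the equal-residue case with a single preimage, from the conductor exponent being at least~$2$, so that the first non-trivial graded piece already contributes $[\kappa(p'):K]\geq 2[L:K]$. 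Summing over the singular points yields $\chi(Q)\geq [L:K]$, contradicting $\chi(Q)=[L:K]-1$. Hence $C'\cong\mathbb P^1_L$, $F=L(T)$, and $C$ is birational over $K$ to $\mathbb P^1_L$.

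The principal obstacle is the local bound $\dim_K Q_p\geq [L:K]$ in the non-split case; its justification rests on the arithmetic constraint that $[\kappa(p):K]$ divides each $[\kappa(p'):K]\geq 2[L:K]$, combined with a careful filtration argument using the powers of the maximal ideal of $\mathcal O_{C',p'}$, split into cases according to the number of preimages and the coincidence or otherwise of residue fields.
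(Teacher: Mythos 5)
Your proof plan is correct in its essentials and takes a genuinely different route from the paper. The paper derives $H^0(C,\mathcal O_C)=K$ from essentially the same Euler-characteristic computation and then simply cites Stacks Project tags (0C6N, 0DWG, 0DJB) for the classification of genus-zero proper curves with $H^0=K$ into conics and rational singular curves. You instead pass to the normalization $\pi:C'\to C$, set $L=H^0(C',\mathcal O_{C'})$, derive $\chi_K(Q)=[L:K]-1$ for the conductor module $Q=\pi_*\mathcal O_{C'}/\mathcal O_C$, and treat each case by hand: the regular case via the anticanonical embedding (re-proving part of what the tags assert), and the singular case by a local conductor analysis that rules out $C'$ being an anisotropic conic over $L$. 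This is more self-contained, at the cost of redoing work the Stacks citations encapsulate; the local bound $\dim_K Q_p\geq[L:K]$ is the genuinely new ingredient.

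That bound is correct and can be streamlined. If some preimage $p'$ of $p$ has $\kappa(p)\subsetneq\kappa(p')$, then $Q_p$ surjects onto $\kappa(p')/\kappa(p)$, and since $[\kappa(p):K]$ properly divides $[\kappa(p'):K]\geq 2[L:K]$ one gets $\dim_K Q_p\geq[\kappa(p'):K]/2\geq[L:K]$. If instead every preimage has $\kappa(p')=\kappa(p)$, then $[\kappa(p):K]\geq 2[L:K]$, and $\dim_K Q_p\geq[\kappa(p):K]$ simply because $Q_p$ is a nonzero finite-length $\mathcal O_{C,p}$-module all of whose composition factors are $\kappa(p)$; the conductor-exponent and graded-piece discussion you sketch is superfluous. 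Two small caveats for a careful write-up: $C'$ need not be geometrically integral over $L$, nor $C$ over $K$ in the regular case, when $K$ is imperfect (a regular non-smooth conic in characteristic $2$ is not geometrically reduced), so you cannot literally invoke \Cref{P:Deu-gen}; but the identity $\gen(C'/L)=\gen(F/L)$ you actually need holds for any regular projective curve with field of constants $L$ and is exactly what the paper asserts inside the proof of \Cref{P:Deu-gen}. Likewise, the very-ampleness of $\omega_C^{-1}$ over an arbitrary field takes a short argument, which is precisely what the cited Stacks tags provide.
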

\begin{proof}
If $C$ is a conic, then it follows by \cite[Tag 0BYD]{stacks-project} that $\gen(C/K)=0$. 

Assume now that $\gen(C/K)=0$. Set $M=\mc{O}_C(C)$.
Since $C$ is projective, connected and reduced, $M$ is a field, by \cite[Corollary 3.3.21]{Liu}. 
Hence, we can view $C$ as a curve over $M$. Since $H^0(C,\mc{O}_C)=\mc{O}_C(C)=M$,
we obtain that
\begin{align*}
0    =  \gen(C/K)  
  =   &\, 1-\dim_K H^0(C, \mc O_{C}) + \dim_K H^1(C, \mc O_{C}) \\
  =  & \, 1 - [M:K] \left( 1 -  \dim_M H^1(C, \mc O_{C})\right).
\end{align*}
This shows that $M=K$. 
The conclusion now follows by \cite[Tag 0C6N, Tag 0DWG]{stacks-project} if $C$ is regular and by  \cite[Tag 0DJB]{stacks-project} if $C$ is singular.
\end{proof}

A curve over $K$ is \textit{rational} if it is birational to $\mathbb{P}_K^1$. 
We call an integral curve over $K$ \textit{ruled} if it is birational as a $K$-scheme to $\mathbb{P}^1_L$ for some finite field extension $L/K$, or equivalently, if its function field over $K$ is ruled.

\begin{lem}\label{algebro-conics-are-pivot}
Let $C$ be an integral conic over $K$ which is singular or which  has a point of odd degree over $K$. Then $C$ is ruled.
More precisely, $C$ is birational over $K$ to $\mathbb{P}_L^1$ for a field extension $L/K$ with $[L:K]\leq 2$.
\end{lem}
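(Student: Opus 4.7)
The plan is to treat the two hypotheses separately, in each case producing an explicit $K$-birational equivalence $C \sim \mathbb{P}^1_L$ with $[L:K]\leq 2$.

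First, suppose $C$ is singular. Since $C$ is integral (hence reduced), by \Cref{E:conics} we may present $C \subseteq \mathbb{P}^2_K$ as the vanishing locus of a square-free homogeneous quadratic $f \in K[X,Y]$; the alternative presentation $f(X,Y)=Z^2$ of type $(i)$ would make $C$ regular, contrary to hypothesis. Integrality of $C$ forces $f$ to be irreducible over $K$, and then necessarily $Y \nmid f$, so $f(T,1) \in K[T]$ is irreducible of degree $2$. Setting $L = K[T]/(f(T,1))$, a quadratic extension of $K$, and working on the affine chart $Y \neq 0$ with $x = X/Y$ and $z = Z/Y$, one computes
\[
K(C) \;=\; \mathrm{Frac}\bigl(K[x,z]/(f(x,1))\bigr) \;=\; L(z),
\]
exhibiting $C$ as $K$-birational to $\mathbb{P}^1_L$.

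Next, suppose $C$ has a closed point $P$ of odd degree $d$. If $C$ is singular we are back in the first case, so assume $C$ is smooth. Then $C$ is a regular conic, hence of arithmetic genus zero (\Cref{P:genus0=conic}); moreover a smooth conic in $\mathbb{P}^2_K$ is cut out by a rank-$3$ ternary quadratic form, which stays absolutely irreducible, so $C$ is geometrically integral. Let $H$ denote the hyperplane class of $C$, which has degree $2$. Since $\gcd(2,d)=1$, some integer combination $aP+bH$ yields a $K$-rational divisor class on $C$ of degree $1$. Riemann--Roch on a smooth geometrically integral genus-zero curve then produces an effective divisor in this class, namely a $K$-rational point $Q \in C(K)$. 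Projection from $Q$ furnishes an isomorphism $C \cong \mathbb{P}^1_K$, so the conclusion holds with $L = K$.

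The main delicate point is the application of Riemann--Roch in the second case, which rests on $C$ being both smooth and geometrically integral; the singular case is purely by explicit construction from the presentation supplied by \Cref{E:conics}, and the degree bound $[L:K]=2$ there is forced by the irreducibility of the quadratic $f(T,1)$.
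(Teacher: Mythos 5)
Your argument for the singular case is essentially the paper's: write $C$ as $f(X,Y)=0$ in $\mathbb{P}^2_K$, use integrality to get $f$ irreducible, and observe that the function field is a rational function field over the quadratic extension $L=K[T]/(f(T,1))$.

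In the regular case, however, there is a genuine gap. After supposing $C$ has a point $P$ of odd degree, you reduce to the case where $C$ is \emph{not singular} and then immediately write ``so assume $C$ is smooth.'' But over a general base field $K$, ``regular'' does not imply ``smooth'': in characteristic $2$ with $K$ imperfect, a regular conic such as $X^2+aY^2=Z^2$ (with $a\in \mg{K}\setminus\sq{K}$) is regular, integral, has a $K$-rational point (e.g.\ $(1:0:1)$), but is not smooth and indeed not geometrically reduced — over $\overline{K}$ the equation becomes the square $(X+\sqrt{a}\,Y+Z)^2$. Your Riemann--Roch argument explicitly invokes both smoothness and geometric integrality, so it does not apply to this case, and you never return to it. This is precisely the extra case the paper treats separately: having obtained a $K$-rational point (the paper cites \cite[Corollary 18.5]{EKM} for this; your $\gcd$-plus-Riemann--Roch route is a reasonable alternative when $C$ is smooth and geometrically integral), the paper distinguishes ``smooth,'' where $C\simeq\mathbb{P}^1_K$, from ``regular but not smooth,'' where $\car(K)=2$ and $C$ is brought to the normal form $X^2+aY^2=Z^2$, yielding $C\simeq\mathbb{P}^1_{K(\sqrt{a})}$. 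To close the gap you would need to add this third sub-case, or else restrict the lemma to perfect (or characteristic-$\neq 2$) base fields — which would be weaker than what is stated, even though the eventual application in \Cref{P:conic-pivot} is over a perfect field.
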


\begin{proof}
Suppose first that $C$ is singular.  By \cref{E:conics} we may assume that $C$ is defined in $\mathbb{P}^2_K$ by $f(X,Y)=0$ for some homogeneous quadratic polynomial $f(X,Y) \in K[X,Y]$. As $C$ is integral, $f$ is irreducible in $K[X,Y]$.
Let $L$ denote the splitting field of the irreducible quadratic polynomial $f(X,1)$ over $K$. 
Then $[L:K]=2$ and the function field of $C$ over $K$ is isomorphic to a rational function field over $L$, whereby $C$ is birational to $\mathbb{P}_L^1$.

Assume now that $C$ is regular and contains a point of odd degree over $K$.
By \cite[Corollary 18.5]{EKM} this implies that $C$ has a $K$-rational point.
If $C$ is smooth over $K$, then we conclude that $C$ is isomorphic to $\mathbb{P}^1_K$. 
Suppose now that $C$ is not smooth over $K$. Then $\car(K) =2$ and, since $C$ is regular and has a $K$-rational point, we get that $C$ 
is given up to isomorphism by an equation $X^2 + a Y^2 =Z^2$ for some $a \in \mg K \setminus \sq K$. It follows that $C$ is isomorphic to $\mathbb{P}^1_{L}$ for $L=K(\sqrt{a})$.
\end{proof}

\begin{lem}\label{L:reduct-to-smooth}
Let $F/K$ be a function field in one variable such that $K$ is relatively separably closed in $F$.
Let ${F_\alg}$ be an algebraic closure of $F$.
There exists a finite purely inseparable extension $K'/K$ in ${F_\alg}$ such that 
the compositum $FK'$ in ${F_\alg}$ is the function field of a geometrically connected smooth projective curve over~$K'$.
\end{lem}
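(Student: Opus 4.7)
The plan is to enlarge the base field $K$ by a finite purely inseparable extension $K'$ in $F_\alg$ so that $FK'/K'$ becomes separable while $K'$ remains relatively algebraically closed in $FK'$. Once this is achieved, the unique regular complete integral curve over $K'$ with function field $FK'$ will automatically be geometrically integral (hence geometrically connected) and smooth, since over a field geometric regularity is equivalent to smoothness.

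First, I would set $K_0$ equal to the relative algebraic closure of $K$ in $F$. Because $F/K$ is finitely generated of transcendence degree one, $K_0/K$ is finite, and the hypothesis that $K$ is separably closed in $F$ forces $K_0/K$ to be purely inseparable. In characteristic zero we can take $K'=K$, so assume $\car(K)=p>0$. Next I would invoke the standard fact that any finitely generated field extension becomes separable after a suitable finite purely inseparable extension of the base (one enlarges by $p^n$-th roots of a transcendence basis until it becomes separating) to obtain, inside $F_\alg$, a finite purely inseparable extension $K_1/K_0$ such that $FK_1/K_1$ is separable. Then I would define $K'$ to be the relative algebraic closure of $K_1$ in $FK_1$; since $FK_1$ is a function field in one variable over $K_1$, the extension $K'/K_1$ is finite.

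The crucial step is to show that $K'/K$ is purely inseparable. The argument I plan to use runs as follows: choose $n$ with $K_1^{p^n}\subseteq K_0$; since $K_0\subseteq F$, this gives $(FK_1)^{p^n}=F^{p^n}K_1^{p^n}\subseteq F$. Hence for any $\alpha\in K'\subseteq FK_1$ one has $\alpha^{p^n}\in F$, while $\alpha$ is algebraic over $K$, so $\alpha^{p^n}\in K_0$, and $K_0/K$ being purely inseparable forces $\alpha\in K^{1/p^\infty}$. Once this is in place, the rest is routine: $K_1\subseteq K'\subseteq FK_1$ gives $FK'=FK_1$, so $K'$ is algebraically closed in $FK'$ by construction, and any separating transcendence basis $t$ of $FK_1/K_1$ remains a separating transcendence basis over $K'$ (it stays transcendental since $K'/K_1$ is algebraic, and finite separability of $FK_1/K_1(t)$ descends to $FK_1/K'(t)$). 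Thus $FK'/K'$ is separable, and the conclusion follows.

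The hard part will be the verification that $K'/K$ is purely inseparable. A priori $K'/K_1$ is a \emph{separable} finite algebraic extension, being the algebraic closure of $K_1$ in the separable function field $FK_1$, so some genuine input is needed to rule out new separable elements appearing over $K$. The prior control afforded by $K_0$ — available thanks to the hypothesis that $K$ is separably closed in $F$ — combined with the observation that a sufficiently high $p$-power of any element of $FK_1$ lies in $F$, is what makes the argument go through.
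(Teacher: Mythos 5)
Your construction of $K'$ and the verification that $K'/K$ is finite and purely inseparable are correct — in particular the observation that $(FK_1)^{p^n}\subseteq F$ is exactly the right lever. The gap is in the final step, where you pass from separability of $FK'/K'$ to smoothness of the regular projective model.

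You have arranged that $FK'/K'$ is separably generated and that $K'$ is relatively algebraically closed in $FK'$. This does give that the regular projective model $C$ over $K'$ is geometrically integral (hence geometrically connected), but it does \emph{not} give that $C$ is smooth over $K'$. The justification offered — ``over a field geometric regularity is equivalent to smoothness'' — is a true statement, but $C$ is only known to be \emph{regular}, not \emph{geometrically} regular, and over an imperfect base these differ. Concretely, take $K'=\mathbb{F}_3(a)$ with $a$ transcendental and $FK'=K'(x,y)$ with $y^2=x^3-a$. Here $FK'/K'$ is separably generated (with separating element $x$) and $K'$ is relatively algebraically closed in $FK'$, yet the regular projective model, the plane cubic $Y^2Z=X^3-aZ^3$, is regular but not smooth: at the closed point where $y=0$ and $x^3=a$ the local ring is a discrete valuation ring, but after base change to an algebraic closure the equation becomes $y^2=(x-a^{1/3})^3$, a cuspidal cubic, so the point fails geometric regularity. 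In this example one genuinely must enlarge the base to $K'(a^{1/3})$, after which the curve becomes rational and the smooth model is $\mathbb{P}^1$. So the stopping criterion ``$FK'/K'$ is separable'' is too weak, and the lemma's conclusion can fail for the $K'$ your construction produces.

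The paper avoids this issue by first passing all the way to the perfect closure $K_{\per}$ of $K$, over which regular does imply smooth. It takes the normal projective model $C'$ of $FK_{\per}$ over $K_{\per}$ (regular, hence smooth since $K_{\per}$ is perfect, and geometrically integral since $K_{\per}$ is relatively algebraically closed in $FK_{\per}$), and then spreads $C'$ out over a finite subextension $K'$ of $K_{\per}/K$ so that $C'=C\times_{K'}K_{\per}$ for a projective curve $C$ over $K'$. Smoothness and geometric connectedness of $C$ over $K'$ then follow because both properties can be tested after the base change to $K_{\per}$. If you want to salvage your argument, you would need to replace the separability criterion by a criterion that actually detects when the regular model over $K'$ is already geometrically regular; going to the perfect closure and then descending, as the paper does, is the standard way to sidestep that difficulty.
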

\begin{proof}
Let $K_{{\per}}$ be the maximal purely inseparable algebraic extension of $K$ inside ${F_\sep}$. Then $K_{{\per}}$ is perfect and $FK_{\per}/K_{\per}$ is a function field in one variable.
Since $K_{\per}/K$ is purely inseparable, so is $FK_{\per}/F$.
It follows that $K$ is relatively separably closed in $FK_{\per}$.
Hence $K_{\per}$ is relatively algebraically closed in $FK_{\per}$.

By \cite[Proposition~7.3.13]{Liu}, $FK_{\per}$ is the function field of a normal projective curve $C'$ over $K_{\per}$.
Then $C'$ is regular, and as $K_{\per}$ is perfect, $C'$ is smooth over $K_{\per}$, by \cite[Corollary 4.3.30]{Liu}.
Moreover, since $K_{\per}$ is relatively algebraically closed in $FK_{\per}$, we have that $C'$ is  geometrically integral, by  \cite[Corollary 3.2.14]{Liu}.

We choose a representation of $C'$ by equations inside projective $n$-space over $K_\per$ for some $n\in\nat$.
We further choose a finite extension $K'/K$ contained in $K_\per$ such that $FK'$ contains the coefficients of these equations as well as the generators of $FK_\per/K_\per$ determined by the variables.
Then $FK'$ is the function field of a projective curve $C$ over $K'$ defined by the same equations, and as $C'=C\times_{K'}K_\per$ it follows that $C$ is smooth over $K'$.
\end{proof}

Let $T$ be a discrete valuation ring, $K$ its field of fractions and $k$ its residue field.
 A \textit{fibered surface over $T$}  is a $2$-dimensional integral scheme $\mc{X}$ together with a flat projective morphism $\mc{X} \to \mathsf{Spec}(T)$. 
 
Let $\mc{X}$ be a normal fibered surface over $T$.
We call  $\mc{X}_K=\mc{X} \times_T K$ the \textit{generic fiber of $\mc{X}$ over $T$} and we call $\mc{X}_s=\mc{X} \times_T k$ the \textit{special fiber of $\mc{X}$ over $T$}. 
By \cite[Lemma~8.3.3]{Liu}, $\mc{X}_K$ is an integral normal projective curve over $K$ and $\mc{X}_s$ is a projective curve over $k$. The function field of $\mc{X}$ is equal to the function field of the generic fiber $\mc{X}_K$. If $\mc{X}$ is regular then we call $\mc{X}$ an \textit{arithmetic surface over $T$}.

An arithmetic surface $\mc{X}$ over $T$ is called \textit{relatively minimal} if any birational morphism $\mc{X} \to \mc{Y}$ of arithmetic surfaces over $T$ is an isomorphism, and it is called  \textit{minimal} if every birational map $\mc{Y} \dashrightarrow \mc{X}$ of arithmetic surfaces over $T$ extends to a morphism. 

\begin{lem}\label{L:model-unramif-ext}
Let $T'/T$ be an extension of discrete valuation rings which is a direct limit of \'etale extensions of $T$.
Let $\mc{X}$ be an arithmetic surface over~$T$. 
Then $\mc{X}'=\mc{X}\times_TT'$ is an arithmetic surface over~$T'$.
If the generic fiber $\mc{X}_K$ is integral and geometrically connected and $\mc{X}$ is relatively minimal over $T$, then either $\mc{X}'$ is relatively minimal over $T'$, or  $\mc{X}_s$ is integral and ruled with $\gen(\mc{X}_s/k)=0$.
\end{lem}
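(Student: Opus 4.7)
The first assertion follows from base change. The morphism $\mc{X}'\to\Spec T'$ inherits flatness, projectivity and relative dimension one from $\mc{X}\to\Spec T$, so $\mc{X}'$ is two-dimensional. Regularity is preserved under étale base change of locally noetherian schemes and extends to direct limits of such extensions, because the local rings of $\mc{X}'$ are filtered colimits of regular local rings of the intermediate $\mc{X}_\alpha = \mc{X}\times_T T_\alpha$. The integrality of $\mc{X}_K$ together with its geometric connectedness (and regularity, since $\mc{X}$ is regular) implies geometric integrality of $\mc{X}_K$, so $\mc{X}'_{K'} = \mc{X}_K\times_K K'$ is integral; by Stein factorization, $\mc{X}'\to\Spec T'$ then has geometrically connected fibers, hence $\mc{X}'$ is connected. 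Being regular and connected, $\mc{X}'$ is integral.

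For the second assertion I argue by contrapositive: assuming $\mc{X}'$ is not relatively minimal, I show that $\mc{X}_s$ is integral and ruled with $\gen(\mc{X}_s/k)=0$. By Castelnuovo's criterion \cite[Theorem~9.3.8]{Liu} there is a prime divisor $E'\subset\mc{X}'_s$ with $E'\simeq\mathbb{P}^1_{L'}$ and $(E')^2=-[L':k']$ for some finite extension $L'/k'$. Since $T'$ is a filtered colimit of étale extensions and $E'$ is of finite presentation, $E'$ descends to an exceptional prime divisor $E_1\subset \mc{X}_1:=\mc{X}\times_T T_1$ for some finite étale intermediate $T_1/T\subseteq T'/T$; after replacing $T_1$ by a suitable enlargement, I may take $T_1/T$ to be Galois with group $G=\Gal(T_1/T)$. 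The $G$-orbit $\Sigma := G\cdot E_1$ is a $G$-invariant reduced effective divisor in $\mc{X}_{1,s}$. Since $G$ acts trivially on $\mc{X}$, all components of $\Sigma$ lie above a single irreducible component $C\subseteq\mc{X}_s$, and $\Sigma$ is exactly the reduced base change $(C\times_k k_1)_{\red}$.

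The central step is the following descent dichotomy. Suppose the $G$-equivariant simultaneous Castelnuovo contraction of $\Sigma$ on $\mc{X}_1$ produces a regular fibered surface $\mc{Y}_1$ over $T_1$; then by étale Galois descent along $T_1/T$, the $G$-invariant data on $\mc{Y}_1$ descends to an arithmetic surface $\mc{Y}$ over $T$ with a nontrivial birational morphism $\mc{X}\to\mc{Y}$ that contracts $C$, contradicting the relative minimality of $\mc{X}$. Hence the simultaneous contraction of $\Sigma$ must fail to yield a regular model. A case analysis — combining Zariski's lemma on the intersection form on vertical divisors (negative semidefinite, with kernel $\mathbb Z\cdot[\mc{X}_{1,s}]$), the exceptional condition $E_i^2=-[L_i:k_1]$ on each component of $\Sigma$, the $G$-symmetry of the intersection pattern, and the equality $\Sigma^2=C^2$ — forces $C^2=0$ and identifies $\Sigma$ as the reduced base change of a singular integral conic. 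Since $\mc{X}_s$ is connected, $C^2=0$ entails $\mc{X}_s=C$, so $\mc{X}_s$ is integral. Being an integral singular conic over $k$, $\mc{X}_s$ is ruled by \cref{algebro-conics-are-pivot}, and its arithmetic genus vanishes by \cref{P:genus0=conic}.

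The principal technical obstacle is the contraction/descent step: one must identify precisely which Galois-invariant configurations of intersecting $(-1)$-curves on $\mc{X}_1$ admit a simultaneous contraction to a regular surface (in which case the descent delivers the contradiction with relative minimality of $\mc{X}$), and derive the structural constraints on $\Sigma$ and $C$ in the complementary cases. This case analysis is what ultimately pins down the configuration as that of a singular integral conic and so yields the conclusion.
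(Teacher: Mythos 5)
Your argument for the first assertion is essentially the paper's (flatness, projectivity and dimension pass to base change; regularity is preserved under a filtered colimit of \'etale extensions, for which the paper cites [Liu, Corollary 4.3.24]). Note though that you invoke the integrality and geometric connectedness of $\mc{X}_K$ already at this stage, whereas those hypotheses belong only to the second sentence of the lemma.

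For the second assertion you take a genuinely different route, and it contains a real gap. The paper's proof is a short dichotomy on $\gen(\mc{X}_K/K)$: if $\gen(\mc{X}_K/K)\geq 1$, then relative minimality of $\mc{X}$ already implies minimality [Liu, Corollary 9.3.24], and minimality is preserved under a direct limit of \'etale extensions [Liu, Proposition 9.3.28], so $\mc{X}'$ is relatively minimal; if $\gen(\mc{X}_K/K)=0$, then $\mc{X}_K$ is a regular conic by \Cref{P:genus0=conic}, the special fiber $\mc{X}_s$ is integral [Liu, Exercise 9.3.1] of genus zero [Liu, Corollary 8.3.6], and it is either geometrically irreducible --- in which case $\mc{X}'$ is again relatively minimal --- or else geometrically reducible but geometrically connected, hence not regular, hence ruled by \Cref{P:genus0=conic}.

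Your contrapositive argument stalls at precisely the point you flag as the ``principal technical obstacle'': you assert that the failure of a simultaneous equivariant Castelnuovo contraction of $\Sigma=G\cdot E_1$ to produce a regular scheme forces $C^2=0$ and identifies $\mc{X}_s$ as an integral singular conic, but you never carry out that case analysis --- and that is the entire content of the lemma in this case. In addition, at least two steps in the sketch require justification: effectivity of the descent datum for the contraction morphism along $T_1/T$; and the inference from $C^2=0$ to $\mc{X}_s=C$, since Zariski's lemma only tells you that $C$ is proportional to the full special fiber in $\mathrm{Div}(\mc{X})\otimes\qq$, leaving open $\mc{X}_s=d\,C$ with $d>1$, in which case $\mc{X}_s$ would not be integral. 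As written the proof is incomplete; the genus dichotomy replaces all of this machinery with appeals to the known structure theory of minimal models, which I would recommend adopting.
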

\begin{proof}
Let $k'$ denote the residue field of $T'$. 
Then the special fiber of $\mc X'$ is ${\mc X}_s\times_k k'$.
By \cite[Proposition 3.2.7]{Liu}, \cite[Proposition 4.3.3]{Liu} and \cite[Corollary 3.3.32]{Liu}, 
$\mc{X}'$ is a $2$-dimensional flat projective scheme over $T'$.
Since $\mc{X}$ is regular and $T'/T$ is a direct limit of \'etale extensions, it follows by \cite[Corollary 4.3.24]{Liu} that ${\mc X}'$ is regular.
Therefore $\mc{X}'$ is an arithmetic surface over $T'$.

Assume now that $\mc{X}_K$ is integral and geometrically connected.
By \cite[Corollary 8.3.6]{Liu}, we obtain that $\mc{X}_s$ is geometrically connected and $\gen(\mc{X}_s/k)=\gen(\mc{X}_K/K)$.  
Assume further that $\mc X$ is relatively minimal.

First consider the case where $\gen(\mc{X}_K/K)\geq 1$.
Then $\mc X$ is minimal over $T$, by \cite[Corollary 9.3.24]{Liu}.
Since $T'/T$ is a direct limit of \'etale extensions, $\mc{X}'$ is minimal over $T'$, by \cite[Proposition 9.3.28]{Liu}, so in particular relatively minimal.

Suppose now that $\gen(\mc{X}_K/K)=0$.
Since $\mc{X}_K$ is integral, it follows by
 \Cref{P:genus0=conic} that $\mc{X}_K$ is a regular conic over $K$.
Furthermore, $\mc{X}_s$ is integral, by \cite[Exercise 9.3.1]{Liu}.
By \cite[Corollary 8.3.6]{Liu}, we have $\gen(\mc{X}_s/k)=\gen(\mc{X}_K/K)=0$.  
If $\mc{X}_s$ is geometrically irreducible, then 
$\mc{X}'$ is relatively minimal.
Assume now that $\mc{X}_s$ is not geometrically irreducible.
Since $\mc{X}_s$ is geometrically connected, it follows that $\mc{X}_s$ is not regular.
Hence \Cref{P:genus0=conic} yields that $\mc{X}_s$ is ruled.
\end{proof}

%%%%%%%%%%%%%%%%%%%%%%%%%%%%%%%%
\section{Counting pivot vertices in a graph}\label{S:pivot} %%%
%%%%%%%%%%%%%%%%%%%%%%%%%%%%%%%%

We consider a finite undirected graph $\mc{B}$.
We denote by $\mc{V}$ the set of vertices and by $\mc{E}$ the set of edges of $\mc{B}$. Hence $\mc{V}$ is a finite set and we can represent $\mc{E}$ as a set of $2$-element subsets of $\mc{V}$.
The \textit{degree} of a vertex $v\in \mc{V}$, denoted by $\deg(v)$, is defined by $\deg(v)=|\{\{v,v'\}\in\mc{E}\mid v'\in \mc{V}\}|$.

We consider an action of a group $G$ on $\mc{B}$  given by 
graph automorphisms. In other words, $G$ acts on the set $\mc{V}$ in such way that, for every $g\in G$ and every $v,v'\in \mc{V}$ with $\{v,v'\}\in\mc{E}$ we have $\{gv,gv'\}\in\mc{E}$.
We call $v\in \mc{V}$ a \textit{$G$-pivot vertex} (of $\mc{B}$) if,
for every $w\in \mc{V}$ with $\{v,w\}\in\mc{E}$, the orbit of $w$ under the stabiliser of $v$ in $G$ has even cardinality, i.e. $|\{gw \mid g\in G, gv=v\}|$ is even.

We simply call $v\in\mc V$ a \textit{pivot vertex of $\mc B$} if $v$ is a $G$-pivot vertex for $G=\mathsf{Aut}(\mc{B})$, the automorphism group of the graph $\mc{B}$.
For any group $G$ acting on $\mc{B}$, a $G$-pivot vertex of $\mc{B}$ is in particular a pivot vertex of $\mc{B}$. 

\begin{prop}\label{pivots in trees}
If $\mc{B}$ is a tree, then $\mc{B}$ has at most one pivot vertex.
\end{prop}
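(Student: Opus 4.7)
The plan is to argue by contradiction through a counting argument on sizes of subtrees. Suppose toward a contradiction that $\mc{B}$ admits two distinct pivot vertices $v$ and $v'$, and let $v=x_0,x_1,\dots,x_n=v'$ be the unique path joining them in the tree $\mc{B}$. Denote by $A$ (resp.\ $B$) the connected component of $v$ (resp.\ of $x_1$) in $\mc{B}$ after removing the edge $\{v,x_1\}$, and by $C$ (resp.\ $D$) the connected component of $v$ (resp.\ of $v'$) in $\mc{B}$ after removing the edge $\{x_{n-1},v'\}$. When $n=1$ these two edges coincide, so $A=C$ and $B=D$.

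The key step is to exploit the pivot property at $v$: since the orbit of the neighbor $x_1$ under the stabiliser of $v$ in $\mathsf{Aut}(\mc{B})$ has positive even cardinality, there exists $g\in\mathsf{Aut}(\mc{B})$ with $g(v)=v$ and $g(x_1)=:y_1\neq x_1$. Then $y_1$ is a further neighbor of $v$, and $g$ restricts to an isomorphism from $B$ onto the connected component $T_{y_1}$ of $y_1$ in $\mc{B}$ after removing the edge $\{v,y_1\}$. Since $T_{y_1}$ lies inside $A$ and is disjoint from $\{v\}$, this gives
$$|A|\;\geq\;1+|B|.$$
The symmetric argument applied at $v'$ with the neighbor $x_{n-1}$ yields
$$|D|\;\geq\;1+|C|.$$

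Now I derive the contradiction. If $n=1$, then the identifications $A=C$ and $B=D$ turn these two inequalities into $|A|\geq 1+|B|$ and $|B|\geq 1+|A|$, which is impossible. If $n\geq 2$, then every vertex in $A$ reaches $v'$ only by traversing both the edge $\{v,x_1\}$ and the edge $\{x_{n-1},v'\}$, so $A\subseteq C$; symmetrically $D\subseteq B$. Chaining the four relations yields
$$|D|\;\geq\;1+|C|\;\geq\;1+|A|\;\geq\;2+|B|\;\geq\;2+|D|,$$
which is absurd. The main obstacle I anticipate is the correct execution of the copy argument giving $|A|\geq 1+|B|$: one must verify carefully that $g(B)=T_{y_1}$ is disjoint from $\{v\}$ and contained in $A$, which hinges on $g$ fixing $v$ and on $B$ not containing $v$. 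Once this first inequality is firmly in place, the concluding size arithmetic is immediate.
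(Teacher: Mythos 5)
Your proof is correct, and it takes a genuinely different route from the paper's. The paper argues by induction on the number of vertices: a pivot vertex cannot have degree $1$ (its unique neighbor would form a singleton, hence odd, orbit under the stabiliser), so one may strip all leaves from the tree and observe that every pivot vertex of $\mc{B}$ remains a pivot vertex of the smaller subtree $\mc{B}'$; iterating, the statement reduces to a tree on at most one vertex. Your argument is instead a direct contradiction: given two pivot vertices $v\neq v'$ joined by a path, you use the pivot property at $v$ to produce an automorphism fixing $v$ and moving $x_1$ to another neighbor $y_1$, which transplants the branch $B$ isomorphically into $A\setminus\{v\}$ and gives $|A|\geq 1+|B|$; the symmetric estimate at $v'$ plus the containments $A\subseteq C$, $D\subseteq B$ (for $n\geq 2$), or the identification $A=C$, $B=D$ (for $n=1$), closes the loop of inequalities. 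All the subsidiary claims check out: $g(B)$ is indeed the branch $T_{y_1}$ because $g$ carries the edge $\{v,x_1\}$ to $\{v,y_1\}$ and hence carries components of $\mc{B}-\{v,x_1\}$ to components of $\mc{B}-\{v,y_1\}$; and $T_{y_1}\subseteq A\setminus\{v\}$ since paths from $T_{y_1}$ to $v$ avoid $\{v,x_1\}$ and since removing $\{v,y_1\}$ disconnects $v$ from $y_1$. What each approach buys: the paper's leaf-peeling induction is shorter and exploits the essential combinatorial feature of trees (existence of leaves plus preservation of pivotness under pruning), whereas your size-counting argument makes the mechanism of the pivot property more visible and shows exactly where the contradiction localizes along the path between the two hypothetical pivots.
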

\begin{proof}
If $|\mc{V}|\leq 1$ then the statement is trivial.
Assume now that  $\mc{B}$ is a tree with $|\mc{V}| > 1$. 
Then $\mc{B}$ has at least one vertex of degree $1$.
By removing all vertices of degree $1$ from $\mc{B}$, we obtain a  subtree $\mc{B}'$ of $\mc B$ with strictly fewer vertices than $\mc{B}$, while all pivot vertices of $\mc{B}$ are also pivot vertices of $\mc{B}'$.
Hence the statement follows by induction on $|\mc{V}|$.
\end{proof}

We denote by $\beta(\mc{B})$ the \textit{Betti number of $\mc{B}$}, which is given by the formula
\begin{eqnarray*}\beta(\mc{B}) & = & |\mc{E}|-|\mc{V}|+1\,.\end{eqnarray*}
Note that for a nontrivial connected graph $\mc{B}$ we have $\beta(\mc{B})\geq 0$, and equality holds if and only if $\mc{B}$ is a tree.

\begin{lem}\label{L:G-fixpoint-pivot-count}
Let $\mc{W}$ denote the set of $G$-pivot vertices in $\mc{B}$.
Assume that $\mc{B}$ is connected and contains a vertex $v_0$ which is fixed by $G$.
Then $|\mc{W}\setminus\{v_0\}|\leq \beta(\mc{B})$.  
\end{lem}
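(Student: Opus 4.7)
The plan is to inject the set $\mc W\setminus\{v_0\}$ into the set of edges of $\mc B$ lying outside a suitably chosen spanning tree, whose cardinality equals $\beta(\mc B)$. Concretely, I would fix a breadth-first spanning tree $\mc T\subseteq\mc B$ rooted at $v_0$. Such $\mc T$ has the defining property that every vertex $v\neq v_0$ admits a unique neighbour $p(v)$ in $\mc T$ lying on the $\mc T$-path from $v$ to $v_0$, and this $p(v)$ is at graph-distance $d(v)-1$ from $v_0$, where $d(v)$ denotes the distance in $\mc B$. Any further $\mc B$-neighbour of $v$ at distance $d(v)-1$ from $v_0$ is then necessarily connected to $v$ by an edge outside $\mc T$.

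Given $w\in \mc W\setminus\{v_0\}$, the $G$-pivot condition applied to the tree-parent $p(w)$ tells us that the $\mathrm{Stab}_G(w)$-orbit of $p(w)$ has even cardinality, hence at least $2$, so I can pick $g_w\in\mathrm{Stab}_G(w)$ with $g_w p(w)\neq p(w)$. Since $v_0$ is $G$-fixed and $g_w$ acts by graph automorphisms, distances to $v_0$ are preserved; so $g_w p(w)$ also lies at distance $d(w)-1$ from $v_0$, and it is a neighbour of $g_w w=w$. Consequently $e_w:=\{g_w p(w),w\}$ is an edge of $\mc B$ outside $\mc T$, since $p(w)$ is the unique $\mc T$-neighbour of $w$ at distance $d(w)-1$.

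The substantive point, and the step I expect to need the most care, is the injectivity of the map $w\mapsto e_w$ on $\mc W\setminus\{v_0\}$. Suppose $e_{w_1}=e_{w_2}$. Each endpoint of this common edge has a well-defined level (distance from $v_0$), and in each presentation one endpoint, namely $w_i$, lies at level $d(w_i)$ while the other, $g_{w_i}p(w_i)$, lies at level $d(w_i)-1$. If $w_1\neq w_2$, the only way to match the endpoints is to set $w_1=g_{w_2}p(w_2)$ and $w_2=g_{w_1}p(w_1)$, forcing simultaneously $d(w_1)=d(w_2)-1$ and $d(w_2)=d(w_1)-1$, a contradiction. Once injectivity is in hand, the desired bound follows from
\[
|\mc W\setminus\{v_0\}|\leq |\mc E(\mc B)\setminus\mc E(\mc T)|=|\mc E|-(|\mc V|-1)=\beta(\mc B).
\]
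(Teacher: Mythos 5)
Your proof is correct, and it takes a genuinely different route from the paper. The paper argues by induction on $\max\{d(v) \mid v\in\mc V\}$: it fixes a vertex $w$ at maximal distance from $v_0$, passes to the full subgraph on $\mc V\setminus Gw$, and tracks how the Betti number drops when the orbit $Gw$ is removed, showing that the drop is at least $|Gw|$ whenever $w$ is a pivot. You instead give a one-shot, non-inductive argument: build a BFS spanning tree $\mc T$ rooted at $v_0$, use the pivot condition to produce for each $w\in\mc W\setminus\{v_0\}$ a non-tree ``cross'' edge $e_w=\{w,g_wp(w)\}$ between level $d(w)$ and level $d(w)-1$, and check injectivity of $w\mapsto e_w$ by the level comparison you carry out. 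Each step is sound: the existence of $g_w\in\mathrm{Stab}_G(w)$ moving $p(w)$ is exactly the pivot condition applied to $p(w)$; $g_w$ preserves distances to $v_0$ because $v_0$ is $G$-fixed; and the injectivity argument correctly rules out $w_1\neq w_2$ by the impossible double inequality $d(w_1)=d(w_2)-1$ and $d(w_2)=d(w_1)-1$. Your approach buys a more transparent and shorter proof — an explicit injection into the $\beta(\mc B)$ non-tree edges — at no cost; the paper's inductive decomposition has the mild advantage of isolating the quantity $\beta(\mc B)-\beta(\mc B')\geq |Gw|$, which makes the role of orbit sizes visible and dovetails with the refinements in \Cref{L:pivot-betti-bound} and \Cref{T:pivot-betti-bound}, but those can equally well be deduced from your argument.
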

\begin{proof}
For $v\in\mc{V}$ let $d(v)$ denote the distance between $v$ and $v_0$, that is,
the smallest $r\in\nat$ such that there exist $v_1,\ldots,v_r\in\mc{V}$ with $\{v_{i-1},v_i\}\in\mc{E}$ for $1\leq i\leq r$ and $v_r=v$.
Note that $d(gv)=d(v)$ for all $v\in\mc{V}$ and $g\in G$.

We prove the statement by induction on $|\mc{E}|$.
If $|\mc{E}|=0$, then $\mc{V}=\{v_0\}$ and the statement holds trivially.
Assume now that $|\mc{E}|\geq 1$.
Set $m=\max\{d(v)\mid v\in\mc{V}\}$, $\mc{M}=\{v\in\mc{V}\mid d(v)=m\}$ and $\mc{E}^\ast=\mc{E}\cap\{\{w,w'\}\mid w,w'\in\mc{M}\}$.

We first consider the case where $\mc{E}^\ast\neq \emptyset$.
We set $\mc{E}'=\mc{E}\setminus\mc{E}^\ast$ and consider the graph $\mc{B}'=(\mc{V},\mc{E}')$.
Note that $\mc{B}'$ is connected, $\beta(\mc{B}')<\beta(\mc{B})$ and the $G$-action on $\mc{B}$ restricts to a $G$ action on $\mc{B}'$.
Furthermore, every $G$-pivot vertex of $\mc{B}$ is also a $G$-pivot vertex of $\mc{B}'$.
Since $|\mc{E}'|<|\mc{E}|$, we therefore obtain by the induction hypothesis that 
$|\mc{W}\setminus\{v_0\}|\leq \beta(\mc{B}')< \beta(\mc{B})$.

We now consider the case where $\mc{E}^\ast=\emptyset$.
Set $\mc{V}'=\mc{V}\setminus \mc{M}$. We consider the graph $\mc{B}'=(\mc{V}',\mc{E}')$ where $\mc{E}'=\mc{E}\cap\{\{v,v'\}\mid v,v'\in\mc{V}'\}$ (i.e.~the full subgraph of $\mc{B}$ spanned by $\mc{V}'$).
Note that $\mc{B}'$ is connected,  $\beta(\mc{B}')\leq \beta(\mc{B})$, the $G$-action on $\mc{B}$ restricts to a $G$-action on $\mc{B}'$, and the elements of $\mc{W}\cap\mc{V}'$ are also $G$-pivot vertices in $\mc{B}'$. 
Since $\mc{M}\neq \emptyset$ we have that $|\mc{E}'|<|\mc{E}|$.
Hence, the induction hypothesis yields that  $|(\mc{W}\cap\mc{V}')\setminus\{v_0\}|\leq \beta(\mc{B}')$. 
Since $\mc{E}^\ast=\emptyset$, we have $|\mc{E}\setminus\mc{E}'| = \sum_{w \in \mc{M}} \deg(w)$ and therefore
$\beta(\mc{B})-\beta(\mc{B}')=\sum_{w\in\mc{M}}(\deg(w)-1)$.
Since every $G$-pivot vertex of $\mc{B}$ has degree at least $2$, we obtain that 
 $\beta(\mc{B})-\beta(\mc{B}')\geq |\mc{W}\cap\mc{M}|$.
Hence we conclude that 
 $|\mc{W}\setminus\{v_0\}|= |(\mc{W}\cap \mc{V}')\setminus\{v_0\}|+|\mc{W}\cap\mc{M}|\leq \beta(\mc{B})$.
\end{proof}

\begin{lem}\label{L:pivot-betti-bound}
Assume that $\mc{B}$ is connected and let $\mc{W}$ be the set of $G$-pivot vertices in $\mc{B}$.
Let $v\in \mc V$. 
Then
$|\mc{W}\setminus Gv| \,\leq \, \beta(\mc{B})+|Gv|-1$.
\end{lem}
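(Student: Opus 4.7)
The plan is to reduce the claim to \Cref{L:G-fixpoint-pivot-count} by enlarging $\mc{B}$ with a single new $G$-fixed vertex attached to the entire orbit $Gv$. Concretely, I would define an auxiliary graph $\mc{B}'=(\mc{V}',\mc{E}')$ by $\mc{V}'=\mc{V}\cup\{v_0\}$ for some new element $v_0\notin\mc{V}$ and $\mc{E}'=\mc{E}\cup\{\,\{v_0,w\}\mid w\in Gv\,\}$, and I would extend the $G$-action to $\mc{B}'$ by declaring $v_0$ to be fixed. Since $Gv$ is $G$-invariant, this yields a well-defined action of $G$ on $\mc{B}'$ by graph automorphisms, and $\mc{B}'$ is connected since $\mc{B}$ is. A direct count of vertices and edges gives
$$\beta(\mc{B}')\,=\,(|\mc{E}|+|Gv|)-(|\mc{V}|+1)+1\,=\,\beta(\mc{B})+|Gv|-1\,.$$

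The key step is to compare the set $\mc{W}'$ of $G$-pivot vertices of $\mc{B}'$ with $\mc{W}$. For any $u\in\mc{V}\setminus Gv$, the sets of neighbors of $u$ in $\mc{B}$ and in $\mc{B}'$ coincide, so $u\in\mc{W}$ if and only if $u\in\mc{W}'$. For any $u\in Gv$, the new vertex $v_0$ is a neighbor of $u$ in $\mc{B}'$ which is fixed by the stabiliser of $u$ in $G$, so the orbit of $v_0$ under this stabiliser has odd cardinality $1$, whence $u\notin\mc{W}'$. Therefore $\mc{W}'\setminus\{v_0\}=\mc{W}\setminus Gv$.

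Applying \Cref{L:G-fixpoint-pivot-count} to $\mc{B}'$ with the $G$-fixed vertex $v_0$ then yields
$$|\mc{W}\setminus Gv|\,=\,|\mc{W}'\setminus\{v_0\}|\,\leq\,\beta(\mc{B}')\,=\,\beta(\mc{B})+|Gv|-1\,,$$
which is the claim. The only point requiring care is the verification that the pivot condition is preserved when passing from $\mc{B}$ to $\mc{B}'$ for vertices outside $Gv$ and destroyed for vertices inside $Gv$; both assertions follow at once from the fact that attaching $v_0$ changes the neighborhood of a vertex $u$ precisely when $u\in Gv$. I do not anticipate any serious obstacle beyond keeping track of this bookkeeping.
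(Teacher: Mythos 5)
Your proof is correct and follows essentially the same route as the paper: you construct the same auxiliary graph with an extra $G$-fixed vertex $v_0$ joined to all of $Gv$, compute $\beta(\mc{B}')=\beta(\mc{B})+|Gv|-1$, observe that attaching $v_0$ destroys the pivot property exactly on $Gv$ and preserves it elsewhere, and then invoke \Cref{L:G-fixpoint-pivot-count}. The only (cosmetic) difference is that the paper also records whether $v_0$ itself is a pivot vertex of $\mc{B}'$, a fact you correctly note is irrelevant since the lemma excludes $v_0$ from the count.
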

\begin{proof}
We apply \Cref{L:G-fixpoint-pivot-count} to a graph obtained from $\mc{B}$ by adding one vertex and connecting it with all vertices in $Gv$.
Let $v_0$ denote an extra vertex, not contained in $\mc{V}$, and set $\mc{V}_0={\mc V}\cup\{v_0\}$ and 
$\mc{E}_0=\mc{E}\cup\{\{v_0,gv\}\mid g\in G\}$.
We obtain that $\mc{B}_0=(\mc{E}_0,\mc{V}_0)$ is a connected graph with $\beta(\mc{B}_0)=\beta(\mc{B})+|Gv|-1$.
By letting $gv_0=v_0$ for all $g\in G$ we extend the $G$-action on $\mc{B}$ to a $G$-action on $\mc{B}_0$.
Note that none of the vertices in $Gv$ is a $G$-pivot vertex in $\mc{B}_0$, and $v_0$ is a $G$-pivot vertex of $\mc{B}_0$ if and only if $|Gv|$ is even.
Hence the set of $G$-pivot vertices of $\mc{B}_0$ is either $(\mc{W}\setminus Gv)\cup\{v_0\}$ or $\mc{W}\setminus Gv$.
We conclude by \Cref{L:G-fixpoint-pivot-count} that $|\mc{W}\setminus Gv|\leq \beta(\mc{B}_0)=\beta(\mc{B})+|Gv|-1$.
\end{proof}

\begin{thm}\label{T:pivot-betti-bound}
Assume that $\mc{B}$ is connected.
Let $\mc W$ be the set of $G$-pivot vertices in $\mc{B}$ and assume that $\mc{W}\neq \emptyset$.
Let $d=\min\{|Gv|\mid v\in\mc{W}\}$.
Then $$|\mc{W}|\leq \beta(\mc{B})+2d-1\,.$$
Furthermore, the number of $G$-orbits in $\mc{W}$ is bounded by $\frac{1}d(\beta(\mc{B})-1)+2$.
\end{thm}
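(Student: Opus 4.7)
The plan is to deduce both inequalities directly from \Cref{L:pivot-betti-bound} by a careful choice of the vertex $v$ whose orbit we remove. Since $\mc{W} \neq \emptyset$, the minimum $d = \min\{|Gv| \mid v \in \mc{W}\}$ is well-defined and realised by some $v_0 \in \mc{W}$ with $|Gv_0| = d$.

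First I would apply \Cref{L:pivot-betti-bound} to this $v_0$, obtaining
$$|\mc{W} \setminus Gv_0| \leq \beta(\mc{B}) + |Gv_0| - 1 = \beta(\mc{B}) + d - 1.$$
Since $\mc{W} = (\mc{W}\setminus Gv_0) \cup Gv_0$ and $|Gv_0| = d$, this immediately gives
$$|\mc{W}| \leq (\beta(\mc{B}) + d - 1) + d = \beta(\mc{B}) + 2d - 1,$$
proving the first inequality.

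For the bound on the number of $G$-orbits, the key observation is that every $G$-orbit contained in $\mc{W}$ has size at least $d$, by definition of $d$. The orbits of $\mc{W}$ split into the single orbit $Gv_0$ together with the orbits contained in $\mc{W} \setminus Gv_0$. If $n$ denotes the number of these latter orbits, then each has cardinality at least $d$, so
$$nd \,\leq\, |\mc{W} \setminus Gv_0| \,\leq\, \beta(\mc{B}) + d - 1,$$
whence $n \leq \frac{1}{d}(\beta(\mc{B}) - 1) + 1$. Adding the one orbit $Gv_0$ yields the asserted bound of $\frac{1}{d}(\beta(\mc{B}) - 1) + 2$ on the total number of $G$-orbits in $\mc{W}$.

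There is no substantial obstacle; the theorem is essentially a packaging of \Cref{L:pivot-betti-bound} in which the extra ingredient is the minimality of $d$, which forces every orbit of a $G$-pivot vertex outside $Gv_0$ to contribute at least $d$ elements to $|\mc{W} \setminus Gv_0|$.
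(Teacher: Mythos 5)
Your proof is correct and follows essentially the same approach as the paper: choose $v_0$ of minimal orbit size, apply \Cref{L:pivot-betti-bound}, and use that every orbit in $\mc{W}$ has cardinality at least $d$. The only cosmetic difference is that you bound the number of orbits in $\mc{W}\setminus Gv_0$ and add one, whereas the paper bounds the total orbit count $r$ directly via $rd\leq|\mc{W}|$; the arithmetic is the same.
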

\begin{proof}
Let $v_0\in\mc{W}$ be such that $|Gv_0|=d$.
It follows by \Cref{L:pivot-betti-bound} that $$|\mc{W}|=|\mc{W}\setminus Gv_0|+d\leq \beta(\mc{B})+2d-1\,.$$
Let $N$ denote the number of $G$-orbits in $\mc{W}$.
Then $Nd\leq |\mc{W}|\leq \beta(\mc{B})+2d-1$ and
therefore $N\leq \frac{1}d(\beta(\mc{B})-1)+2$.
\end{proof}

\begin{cor}\label{C:pivot-orbit-bound-opt}
Assume that $\mc{B}$ is connected.
Let $N$ be the number of $G$-orbits of $G$-pivot vertices in $\mc{B}$.
Then $N\leq \beta(\mc{B})+1$.
Moreover, if $N=\beta(\mc{B})+1$, then either the $G$-pivot vertices in $\mc{B}$ coincide with the $G$-fixed vertices in $\mc{B}$, or
$\beta(\mc{B})=1$ and $\mc{B}$ has no $G$-fixed vertices.
\end{cor}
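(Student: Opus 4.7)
The plan is to deduce the corollary from \Cref{T:pivot-betti-bound} together with \Cref{L:G-fixpoint-pivot-count}. If the set $\mc{W}$ of $G$-pivot vertices is empty, then $N=0$ and the bound $N\leq \beta(\mc{B})+1$ holds trivially. Otherwise, set $d=\min\{|Gv|\mid v\in\mc{W}\}$, so that \Cref{T:pivot-betti-bound} yields $N\leq \frac{1}{d}(\beta(\mc{B})-1)+2$. When $d=1$ this is immediately the desired bound. When $d\geq 2$, the same bound is at most $\beta(\mc{B})+1$ provided $\beta(\mc{B})\geq 1$; the remaining case $d\geq 2$, $\beta(\mc{B})=0$ is impossible by \Cref{pivots in trees}, which asserts that any tree has at most one pivot vertex and thus forces $|\mc{W}|\leq 1$, hence $d=1$.

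To analyse the equality case, assume $N=\beta(\mc{B})+1$ and split on $d$. If $d=1$, pick a $G$-fixed vertex $v_0\in\mc{W}$. Applying \Cref{L:G-fixpoint-pivot-count} to $v_0$ yields $|\mc{W}\setminus\{v_0\}|\leq \beta(\mc{B})$, while $\mc{W}\setminus\{v_0\}$ decomposes into the remaining $N-1=\beta(\mc{B})$ nonempty $G$-orbits. A pigeonhole argument forces every such orbit to be a singleton, so every element of $\mc{W}$ is $G$-fixed. For the converse inclusion, let $v\in\mc{V}$ be an arbitrary $G$-fixed vertex and re-apply \Cref{L:G-fixpoint-pivot-count} with base point $v$: were $v$ not in $\mc{W}$, we would obtain $|\mc{W}|=|\mc{W}\setminus\{v\}|\leq \beta(\mc{B})$, contradicting $|\mc{W}|=\beta(\mc{B})+1$. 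Hence $\mc{W}$ coincides with the set of $G$-fixed vertices.

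If instead $d\geq 2$, the equality $\beta(\mc{B})+1=N\leq \frac{1}{d}(\beta(\mc{B})-1)+2$ rearranges to $(\beta(\mc{B})-1)(1-\tfrac{1}{d})\leq 0$, forcing $\beta(\mc{B})\leq 1$; and $\beta(\mc{B})=0$ is excluded as above, so $\beta(\mc{B})=1$ and $N=2$. The bound $|\mc{W}|\leq \beta(\mc{B})+2d-1=2d$ from \Cref{T:pivot-betti-bound} is matched by $|\mc{W}|\geq Nd=2d$, whence $|\mc{W}|=2d\geq 4$. The existence of any $G$-fixed vertex $v_0\in\mc{V}$ would then give $|\mc{W}\setminus\{v_0\}|\leq 1$ via \Cref{L:G-fixpoint-pivot-count}, so $|\mc{W}|\leq 2$, a contradiction. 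I expect the main subtlety to lie in the $d=1$ case of the equality analysis, where \Cref{L:G-fixpoint-pivot-count} has to be applied twice — once to a pivot that is fixed, and again to an \emph{arbitrary} fixed vertex — in order to also derive the converse inclusion of the fixed-point set in $\mc{W}$.
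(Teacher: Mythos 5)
Your proof is correct and relies on the same two ingredients as the paper's proof, namely \Cref{T:pivot-betti-bound} and \Cref{L:G-fixpoint-pivot-count}; the difference is purely in the organization of the case analysis. The paper opens with a clean dichotomy: if some $G$-fixed vertex is \emph{not} a pivot, then a single application of \Cref{L:G-fixpoint-pivot-count} already yields $N\leq\beta(\mc{B})$ and the corollary is settled; otherwise one may assume that every $G$-fixed vertex belongs to $\mc{W}$, which makes the inclusion ``fixed $\Rightarrow$ pivot'' automatic in the equality analysis and avoids any further appeal to \Cref{L:G-fixpoint-pivot-count}. You instead keep the full setting and, in the $d=1$ equality case, invoke \Cref{L:G-fixpoint-pivot-count} twice — once at a fixed pivot $v_0$ to force every orbit to be a singleton, and once at an arbitrary fixed vertex to obtain the converse inclusion — and in the $d\geq 2$ case you invoke it a third time to rule out fixed vertices altogether. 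Both routes are valid; the paper's initial split buys a slightly shorter equality discussion, while your version makes the two inclusions of the fixed-point characterization more explicitly visible. One minor remark: in the $d\geq 2$, $\beta(\mc{B})=0$ subcase of the bound, the exclusion via \Cref{pivots in trees} is correct but not strictly needed, since $N\leq 2-\tfrac1d<2$ together with integrality already gives $N\leq 1=\beta(\mc{B})+1$.
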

\begin{proof}
Let $\mc W$ be the set of $G$-pivot vertices in $\mc{B}$.
If $\mc{B}$ has a $G$-fixed vertex $v$ which is not a $G$-pivot vertex, then 
it follows by \Cref{L:G-fixpoint-pivot-count} that $|\mc{W}|\leq \beta(\mc{B})$, whereby $N\leq \beta(\mc{B})$.

We now assume that all $G$-fixed vertices belong to~$\mc W$.
We further assume that $\mc{W}\neq \emptyset$, because otherwise there is nothing to show.
Set $d=\min\{|Gv|\mid v\in\mc{W}\}$.
By \Cref{T:pivot-betti-bound}, we have that $N\leq \frac{1}d(\beta(\mc{B})-1)+2$.
If $\beta(\mc{B})\geq 2$ and $d\geq 2$, then $\frac{1}d(\beta(\mc{B})-1)+2<\beta(\mc{B})+1$, whereby $N\leq \beta(\mc{B})$.
If $\beta(\mc{B})=1$ and $d\geq 2$, then $\mc{B}$ has no $G$-fixed vertex and $N\leq 2=\beta(\mc{B})+1$.
If $\beta(\mc{B})=0$, then $\mc{B}$ is a tree, and as $\mc{W}\neq\emptyset$, it follows by \Cref{pivots in trees} that $N=|\mc{W}|=1=\beta(\mc{B})+1$, so that $\mc{W}$ consists of the unique $G$-fixed vertex of $\mc{B}$.

Assume finally that $d= 1$. Then \Cref{T:pivot-betti-bound} yields that $N\leq |\mc{W}|\leq \beta(\mc{B})+1$. 
Assume now in addition that $N=\beta(\mc{B})+1$. 
Then $N=|\mc{W}|$, which implies that the elements of $\mc{W}$ are fixed by $G$. Since all $G$-fixed vertices belong to $\mc{W}$, we conclude that $\mc{W}$ consists of the $G$-fixed vertices in $\mc{B}$.
\end{proof}

\begin{ex}\label{EX:dihedral}
Let $n\geq 3$ and let $\mc{B}$ be the circle graph with $n$ vertices.
Note that $\beta(\mc{B})=1$.
The automorphism group $G$ of $\mc{B}$ is the dihedral group of order $2n$.
Every vertex of $\mc{B}$ is a $G$-pivot vertex, and they all lie in one $G$-orbit. 

Assume now that $n$ is even. 
Then $G$ has a unique noncyclic subgroup $G'$ of index $2$, the dihedral group of order $n$. Under the action of $G'$ on $\mc{B}$, the $n$ vertices lie in two disjoint $G'$-orbits.
Moreover, if $n\equiv 2\bmod 4$, then all vertices of $\mc{B}$ are also $G'$-pivot vertices, hence in this case the number of $G'$-orbits of $G'$-pivot vertices is equal to $2$.
\end{ex}

The example shows that there is no bound on the total number of pivot vertices in a connected graph $\mc{B}$ with $\beta(\mc{B})=1$. 
This case is nevertheless an exception.

\begin{thm}\label{T:total-pivot-bound}
Let $\mc{B}$ be a connected graph with $\beta(\mc{B})\geq 2$.
Then the total number of pivot vertices in $\mc{B}$ is bounded by $3(\beta(\mc{B})-1)$.
\end{thm}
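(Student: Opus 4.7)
The plan is a case analysis based on the number $r$ of $G$-orbits in the pivot set $\mc{W}$ and the minimum pivot orbit size $d$. Write $G = \mathsf{Aut}(\mc{B})$ and let $O_1, \ldots, O_r$ be the $G$-orbits in $\mc{W}$ with sizes $d_1 \leq \cdots \leq d_r$, so that $d = d_1$.

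The opening move is a summation argument: \Cref{L:pivot-betti-bound} applied to each pivot orbit $O_i$ gives $|\mc{W}| - d_i \leq \beta + d_i - 1$, and summing over $i$ yields $(r-2)|\mc{W}| \leq r(\beta-1)$. For $r \geq 3$ this already gives $|\mc{W}| \leq \tfrac{r}{r-2}(\beta-1) \leq 3(\beta-1)$. For $r \leq 2$ and $d \leq \beta - 1$, \Cref{T:pivot-betti-bound} gives $|\mc{W}| \leq \beta + 2d - 1 \leq 3(\beta - 1)$ directly.

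The main case is $r \leq 2$ with $d \geq \beta$. If $\mc{B}$ has a $G$-fixed vertex, \Cref{L:G-fixpoint-pivot-count} gives $|\mc{W}| \leq \beta + 1 \leq 3(\beta - 1)$ using $\beta \geq 2$. Otherwise every $G$-orbit has size at least $2$, and I would rule out $\mc{V} = \mc{W}$ by a degree-sum argument: each pivot has even degree at least $2$, within each orbit all degrees agree by $G$-transitivity, and the identity $\sum_v \deg(v) = 2\beta + 2|\mc{V}| - 2$ forces either $\mc{B}$ to be a single cycle (so $\beta = 1$, excluded) or the smallest pivot orbit to have size at most $\beta - 1$ (contradicting $d \geq \beta$). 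Hence non-pivot vertices exist; writing $s$ for the minimum size of a non-pivot $G$-orbit and applying \Cref{L:pivot-betti-bound} to a non-pivot representative gives $|\mc{W}| \leq \beta + s - 1$, which yields the bound as soon as $s \leq 2(\beta - 1)$.

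The hard part will be the residual sub-case in which every non-pivot orbit has size at least $2\beta - 1$. To handle this I would combine the edge-sum identity, rewritten as $\sum_j s_j(D_j - 2) = 2\beta - 2 - 2d(a + b - 1)$ (where $2a, 2b$ are the common numbers of $\mc{W}$- and $(\mc{V} \setminus \mc{W})$-neighbors of a pivot, both even by the pivot condition, and $D_j$ is the common degree on a non-pivot orbit $O'_j$), with the $G$-equivariance divisibility constraints $|O|\,e_{O,O'} = |O'|\,e_{O',O}$ between any two distinct orbits, where $e_{O,O'}$ denotes the common number of $O'$-neighbors of any vertex of $O$. The enforced largeness of every non-pivot orbit together with these divisibility relations and $d \geq \beta \geq 2$ should yield a numerical contradiction, completing the proof.
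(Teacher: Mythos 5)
Your opening three cases ($r\geq 3$; $r\leq 2$ with $d\leq\beta-1$; $r\leq 2$, $d\geq\beta$, with a $G$-fixed vertex) are correct, and so is the observation that every $G$-pivot vertex has even degree, which is what rules out $\mc{V}=\mc{W}$ in your degree-sum step. But the residual sub-case — $r\leq 2$, $d\geq\beta$, no $G$-fixed vertex, every non-pivot orbit of size $\geq 2\beta-1$ — is a genuine gap. The formula $\sum_j s_j(D_j-2)=2\beta-2-2d(a+b-1)$ you propose to exploit is only correct for $r=1$ (with $r=2$ the two pivot orbits may have different neighbour profiles), the "divisibility constraints" are not spelled out, and you yourself only say the combination \emph{should} yield a contradiction. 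As written, the best you can extract from \Cref{L:pivot-betti-bound} in this case is $|\mc{W}|\leq\beta+(2\beta-1)-1=3\beta-2$, which misses the target by one. Separately, you never pass to the subgraph where every vertex has degree $\geq 2$; with degree-one vertices around, terms $\deg(v)-2=-1$ appear and your edge-sum bookkeeping does not control $|\mc{V}_+|$ or the orbit sizes the way you need.

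The paper's proof avoids your $r$/$d$ case split entirely and closes everything in two short steps. It first removes degree-one vertices (this changes neither $\mc{W}$ nor $\beta$, and $G$ still acts), so that $2(\beta-1)=\sum_{v}(\deg(v)-2)$ with all summands nonnegative. Then it picks $v$ of minimal degree among $\mc{V}_+=\{w:\deg(w)\geq 3\}$, which is nonempty since $\beta\geq 2$. If $\deg(v)=3$, then $v$ is not a pivot (odd degree) and $|\mc{V}_+|\leq 2(\beta-1)$, so \Cref{L:pivot-betti-bound} gives $|\mc{W}|=|\mc{W}\setminus Gv|\leq\beta+|Gv|-1\leq 3(\beta-1)$. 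If $\deg(v)\geq 4$, then $|\mc{V}_+|\leq\beta-1$, and $|\mc{W}|\leq|\mc{W}\setminus Gv|+|Gv|\leq\beta+2|Gv|-1\leq 3(\beta-1)$. Note that exactly this argument closes your residual case: after the degree reduction, $\mc{V}_+$ necessarily contains a $G$-orbit of size $\leq 2(\beta-1)$ which is either non-pivot (degree three) or of size $\leq\beta-1<d$ and hence non-pivot — so your "hard part" dissolves. I would encourage you to adopt the reduction to minimum degree $\geq 2$ and the single application of \Cref{L:pivot-betti-bound}; the preliminary case analysis on $r$ and $d$ then becomes unnecessary.
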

\begin{proof}
To prove the statement it is convenient to reformulate it as follows.
We fix a group $G$ acting on the graph $\mc{B}$ and claim that the number of $G$-pivot vertices in $\mc{B}$ is bounded by $3(\beta(\mc{B})-1)$.
Taking for $G$ the automorphism group of the graph $\mc{B}$ then yields the theorem.

Note that neither the set of $G$-pivot vertices, nor the Betti number change if we remove from the graph $\mc{B}$ all vertices of degree $1$, and the graph remains connected under this reduction.
We may therefore reduce to the case where $\deg(v)\geq 2$ holds for all $v\in\mc{V}$.
Since $\beta(\mc{B})-1=|\mc{E}|-|\mc{V}|$ we have 
\begin{eqnarray}\label{E:betti-deg}
2(\beta(\mc{B})-1) & = & \sum_{v\in\mc{V}} (\deg(v)-2) \,.
\end{eqnarray}
Let $\mc{W}$ be the set of $G$-pivot vertices of $\mc{B}$ and $\mc{V}_+=\{v\in\mc{V}\mid \deg(v)\geq 3\}$.
As $\beta(\mc{B})\geq 2$, we have $\mc{V}_+\neq\emptyset$, so we fix some $v\in\mc{V}_+$ of minimal degree.
Since $Gv\subseteq \mc{V}_+$ we have $|Gv|\leq |\mc{V}_+|$.

Assume first that $\deg(v)=3$. 
Then $v$ is not a pivot vertex, so $Gv\cap \mc{W}=\emptyset$.
Since $\deg(v)=2$ for all $v\in\mc{V}\setminus\mc{V}_+$, we obtain by \eqref{E:betti-deg} that
$|\mc{V}_+|\leq 2(\beta(\mc{B})-1)$.
Using \cref{L:pivot-betti-bound}, we conclude that 
$|\mc{W}|=|\mc{W}\setminus Gv|\leq \beta(\mc{B})+|Gv|-1\leq 3(\beta(\mc{B})-1)$.

Assume now that $\deg(v)\geq 4$.
Since we have chosen $v$ of minimal degree in $\mc{V}_+$, we can now conclude from \eqref{E:betti-deg} that $|\mc{V}_+|\leq \beta(\mc{B})-1$.
Applying \cref{L:pivot-betti-bound} again, we obtain that
$|\mc{W}|\leq |\mc{W}\setminus Gv|+|Gv|\leq \beta(\mc{B})+2|Gv|-1\leq 3(\beta(\mc{B})-1)$.
\end{proof}

\begin{ex}
Let $n$ be a positive integer. Let $\mc{B}$ be the graph with $3n$ vertices 
$a_1,b_1,c_1,\dots,a_n,b_n,c_n$ such that 
$a_i$ is connected to the four vertices $b_{i-1},c_{i-1},b_i,c_i$ for $1\leq i\leq n$, where $b_0=b_n$ and $c_0=c_n$.
(Hence the graph $\mc{B}$ resembles a necklace composed of $n$ rings.)
It is easy to see that $\beta(\mc{B})=n+1$ and that all vertices of $\mc{B}$ are pivot vertices. Hence in this case the upper bound $3(\beta(\mc{B})-1)$ from \Cref{T:total-pivot-bound} for the number of pivot vertices in $\mc{B}$ is attained.
\end{ex}

%%%%%%%%%%%%%%%%%%%%%%%%%%%%%%%%%%%%%%%%%%%%%%
\section{Galois action on the intersection graph of a curve}\label{intersection graphs}   %%%
%%%%%%%%%%%%%%%%%%%%%%%%%%%%%%%%%%%%%%%%%%%%%%

Let $k$ be a perfect field. Let ${k_{\alg}}$ denote an algebraic closure of $k$ and $\mc{G}$ the absolute Galois group of $k$. 
Let $X$ be an algebraic variety over $k$. When we apply scheme theoretic terms to closed subsets of $X$ we implicitly view them as closed subschemes with their induced reduced structure.
We set  $\ovl{X}=X\times_k{k_{\alg}}$, which is the base change of $X$ to ${k_{\alg}}$.  
The projection $\pi: \ovl{X}\rightarrow X$ is a surjective closed morphism of $k$-schemes.
The group $\mc G$ acts naturally on ${k_{\alg}}$. Letting $\mc{G}$ act trivially on  $X$, we obtain a natural action of $\mc G$ on $\ovl{X}$.
With respect to this action, $\pi$ is $\mc{G}$-equivariant. 
  For every extension $\ell/k$ contained in  $k_\alg$, we set $X_\ell = X \times_k \ell$
and we denote by $\pi_\ell: \overline{X} \to X_\ell$ the corresponding projection morphism.

\begin{prop}\label{P:G-act-closed-subspace}
Assume that $X$ is integral. 
Let $C$ be an irreducible component of $\ovl{X}$. 
Let $\ell\subseteq k_\alg$ denote the fixed field of $\{\rho\in\mc{G}\mid \rho(C)=C\}$ and $C_0=\pi_\ell(C)$. 
Then $C=C_0 \times_\ell k_\alg$ and $k(X)\simeq \ell(C_0)$. 
\end{prop}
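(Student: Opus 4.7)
The plan is to pass to an affine open $X = \Spec A$ containing the generic point of $C$, thereby translating the statement into commutative algebra. Let $\mfp\subset\ovl{A}:=A\otimes_k k_\alg$ be the minimal prime corresponding to $C$, so that $\mfp_0:=\mfp\cap(\ell\otimes_k A)$ corresponds to $C_0\subseteq X_\ell=\Spec(\ell\otimes_k A)$. The first assertion $C=C_0\times_\ell k_\alg$ then amounts to the ideal-theoretic equality $\mfp=\mfp_0\ovl{A}$. The inclusion $\mfp_0\ovl{A}\subseteq\mfp$ is obvious. For the reverse inclusion I would combine two ingredients: the quotient $\ovl{A}/\mfp_0\ovl{A}\cong k_\alg\otimes_\ell\bigl((\ell\otimes_k A)/\mfp_0\bigr)$ is reduced, because $\ell$ is perfect as a finite extension of the perfect field $k$, so $\mfp_0\ovl{A}$ is a radical ideal; and the group $H=\Gal(k_\alg/\ell)$ acts transitively on the minimal primes of $\ovl{A}$ lying over $\mfp_0$ while fixing $\mfp$ by the very definition of $\ell$. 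Hence $\mfp$ is the unique minimal prime over $\mfp_0$, giving $\mfp_0\ovl{A}=\mfp$.

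For the second assertion, let $K'\subseteq k(X)$ denote the relative algebraic closure of $k$ in $k(X)$, a finite separable extension of $k$. Since $K'$ is perfect and algebraically closed in $k(X)$, the field extension $k(X)/K'$ is regular, and the generic fibre of $\ovl{X}\to X$ decomposes as
\[
k_\alg\otimes_k k(X)\;=\;\prod_{\sigma\in\mathrm{Hom}_k(K',k_\alg)}k_\alg\otimes_{K',\sigma}k(X),
\]
with each factor a field. This identifies the irreducible components of $\ovl{X}$ with the $k$-embeddings of $K'$ into $k_\alg$. If $\sigma_0$ is the embedding corresponding to $C$, its stabiliser in $\mc{G}$ is $\Gal(k_\alg/\sigma_0(K'))$, so $\ell=\sigma_0(K')$ and $\sigma_0\colon K'\to\ell$ is a field isomorphism. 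The analogous decomposition of $\ell\otimes_k k(X)$ identifies $\ell(C_0)$ with the factor $\ell\otimes_{K',\sigma_0}k(X)\simeq k(X)$, the final isomorphism coming from using $\sigma_0$ to identify $K'$ with $\ell$.

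The main technical point is the transitivity of $H$ on the minimal primes of $\ovl{A}$ over $\mfp_0$; I would establish it by writing $k_\alg/\ell$ as a directed union of finite Galois subextensions $\ell'/\ell$ and applying the classical Galois-theoretic transitivity at each finite level $\ell\otimes_k A\subseteq\ell'\otimes_k A$, then passing to the colimit. The other delicate point is verifying that $k(X)/K'$ is regular (separable, since $K'$ is perfect; primary, since $K'$ is algebraically closed in $k(X)$), which is precisely what ensures that the factors in the displayed product are genuinely fields rather than merely integral domains.
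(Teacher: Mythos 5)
Your argument is correct and follows essentially the same route as the paper: Galois transitivity on the minimal primes over $\mfp_0$ together with reducedness of the base change over the perfect field $\ell$ gives the first assertion, and the decomposition of $\ell\otimes_k k(X)$ through the relative algebraic closure $K'$ of $k$ in $k(X)$, using regularity of $k(X)/K'$, gives the second. The paper packages the same ingredients slightly differently, invoking a Mumford--Oda transitivity theorem for both steps and working with an affine regular model of $X$ rather than directly at the level of function fields, but the underlying decomposition and Galois bookkeeping coincide with yours.
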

\begin{proof}
Since $\pi_\ell$ is a closed morphism, it is clear that $C_0$ is an irreducible component of $X_\ell$.   
By \cite[Theorem 4.2.4]{MO15}, the group $\mc{H}=\{\rho\in\mc{G}\mid \rho(C)=C\}$ acts transitively on the set of irreducible components of $C_0\times_\ell k_\alg$.
Since $C$ is an irreducible component of $C_0\times_\ell k_\alg$ and is stabilized by $\mc{H}$, we conclude that $C = C_0 \times_\ell k_\alg$.

Let ${\ell'}$ denote the relative algebraic closure of $k$ in $k(X)$ and let $\mc{E}$ be the set of $k$-embeddings $\ell'\to k_\alg$. 
Then $|\mc{E}|=[{\ell'}:k]<\infty$ and, by \cite[Theorem 4.2.4]{MO15}, there is a $\mc{G}$-invariant bijection between $\mc{E}$ and the set of irreducible components of $\ovl{X}$. 
Hence $\mc{H}$ is also the stabilizer of some $\rho\in\mc{E}$.
Then $\rho({\ell'}) \subseteq \ell$, and since $[\ell:k]=[\mc{G}:\mc{H}]=|\mc{E}|=[{\ell'}:k]$, we conclude that $\rho({\ell'}) = \ell$.

In order to show that $k(X) \simeq \ell(C_0)$, we may assume that $X$ is regular and affine. In particular, $\mc{O}_X(X)$ is normal and thus ${\ell'} \subseteq \mc{O}_X(X)$.
Hence $X$ is a variety over ${\ell'}$.
Since $k$ is perfect, we have
 $${\ell'} \otimes_k \ell \simeq \prod_{i=1}^s \ell_i$$ 
 for some $s\leq [\ell:k]$ and some finite field extensions $\ell_i/\ell$ for $1 \leq i \leq s$.
 Hence 
 $$X_\ell\,=\,X \times_k \ell \,\simeq\, (X \times_{{\ell'}} {\ell'}) \times_k \ell \,\simeq\, X \times_{{\ell'}} \Spec({\ell'} \otimes_k \ell) \,\simeq\, \bigsqcup_{i=1}^s X \times_{{\ell'}} \ell_i.$$
For $1\leq i\leq s$, the component $X \times_{{\ell'}} \ell_i$ is irreducible, and it is geometrically irreducible as an $\ell$-variety if and only if $\ell_i=\ell$. Hence, those components of $X_\ell$ which are geometrically irreducible as $\ell$-varieties are isomorphic as  $k$-varieties to $X$.
As $C_0$ is geometrically irreducible over $\ell$, we conclude that $k(X) \simeq \ell(C_0)$. 
 \end{proof}

Let  $X$ be a projective curve over $k$.
Then $\ovl{X}$ is a projective curve over ${k_{\alg}}$, by \cite[Proposition 3.2.7 and Corollary 3.3.32]{Liu}.
Let $\mc{C}$ be the set of irreducible components of $\ovl{X}$ and let $\mc{P}$ be the set of those points on $\ovl{X}$ where at least two distinct irreducible components of $\ovl{X}$ intersect.
Let $$\mc{E}=\{(Y,P)\in\mc{C}\times\mc{P}\mid P\mbox{ lies on } Y\}\,.$$
We define the bipartite graph $$\mc{B}_X=(\mc{C},\mc{P},\mc{E})\,,$$
 hence  taking the elements of $\mc{C}$ and of $\mc{P}$ as the vertices of two different colors, say \textit{cyan} and \textit{pink}, and by letting $Y\in\mc{C}$ and $P\in\mc{P}$ be linked by an edge if and only if the point $P$ lies on the component $Y$.
We call $\mc B_{X}$ the \textit{bipartite intersection graph of $X$}.
We also view $\mc B_{X}$ as a uncolored graph with vertex set $\mc{C}\cup\mc{P}$ and edge set $\{\{C,V\}\mid (C,V)\in\mc E\}$ to apply the setup of  the previous section.
 
\begin{prop}
The graph $\mc{B}_X$ is connected if and only if the curve $X$ is geometrically connected.
\end{prop}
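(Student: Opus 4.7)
The plan is to translate topological connectedness of $\ovl{X}$ into graph-theoretic connectedness of $\mc{B}_X$. By definition $X$ is geometrically connected if and only if $\ovl{X}$ is connected as a $k_\alg$-scheme, equivalently its underlying topological space is connected. Since $\ovl X$ is a projective curve over $k_\alg$ (by \cite[Proposition 3.2.7 and Corollary 3.3.32]{Liu}), its underlying space is a finite union of its irreducible components, and any two distinct components meet in a finite set of closed points, each of which by definition belongs to $\mc{P}$. Thus ``$\ovl{X}$ connected'' translates into ``the components in $\mc{C}$ can be linked pairwise through successive intersections in $\mc{P}$'', which is exactly the bipartite connectivity of $\mc B_X$. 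I would formalise this via two contrapositives.

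For the implication ``$\ovl X$ connected $\Rightarrow \mc B_X$ connected'', I would assume $\mc B_X$ is disconnected. Every pink vertex is adjacent to at least two cyan vertices by construction of $\mc{P}$, so each connected component of $\mc{B}_X$ contains at least one cyan vertex. This yields a partition $\mc{C}=\mc{C}_1 \sqcup \mc{C}_2$ with both parts non-empty, belonging to different connected components of $\mc{B}_X$. Setting $X_i=\bigcup_{Y\in\mc{C}_i}Y$, both are closed subsets with $X_1\cup X_2=\ovl X$. Any point in $X_1\cap X_2$ would lie on components from both $\mc{C}_1$ and $\mc{C}_2$, hence would be a pink vertex adjacent to vertices of both sides, contradicting the chosen partition. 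So $\ovl X = X_1\sqcup X_2$ is disconnected, a contradiction.

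For the converse, assume $\ovl X=X_1\sqcup X_2$ with both parts non-empty, open, and closed. Every irreducible component of $\ovl X$ is connected and hence lies entirely in one of $X_1$ or $X_2$, giving a non-trivial partition $\mc{C}=\mc{C}_1\sqcup\mc{C}_2$; similarly every point of $\mc P$ lies in exactly one side. A cyan vertex in $\mc{C}_i$ and a pink vertex in the other $X_j$ cannot be incident, since the pink vertex would then have to lie on a component contained in $X_j$. Hence $\mc B_X$ splits as the disjoint union of two non-empty subgraphs and is disconnected.

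The argument is essentially formal once this translation is in place; the only points requiring care are noting that pink vertices have degree at least two (so a disconnection of $\mc{B}_X$ always separates cyan vertices as well) and that isolated cyan vertices, corresponding to irreducible components meeting no other component, legitimately produce isolated topological connected components of $\ovl X$. I do not anticipate a substantive obstacle.
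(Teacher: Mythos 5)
Your proof is correct and follows essentially the same approach as the paper: both reduce the statement to the observation that connected components of $\ovl{X}$ correspond to maximal chains of irreducible components linked at intersection points, which by construction lie in $\mc{P}$. The paper states this correspondence directly, while you work out the two implications via contrapositives; the substance is the same.
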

\begin{proof}
Since the curve $X$ is projective over $k$, so is the curve $\ovl{X}$ over $k_\alg$.
Two points $x$ and $x'$ of $\ovl{X}$ belong to the same connected component of $\ovl{X}$ if and only if there exists $r\in\nat$ and a sequence of irreducible components $Y_0,\dots,Y_r$ of $\ovl{X}$ such that $x\in Y_0$, $x'\in Y_r$ and such that $Y_{i-1}\cap Y_i\neq\emptyset$ for $1\leq i\leq r$.
Since any intersection point of two different irreducible components of $\ovl{X}$ belongs to $\mc{P}$, it follows that the connected components of $\ovl{X}$ correspond to the connected components of the graph $\mc{B}_X$.
Hence, the graph $\mc{B}_X$ is connected if and only if $\ovl{X}$ is connected as a curve, which is if and only if $X$ is geometrically connected.
\end{proof}

We will now take into account the natural action of the absolute Galois group $\mc{G}$ on the $k$-scheme $\ovl{X}$.
For any $\s\in\mc{G}$  we have $\s(\mc{C})=\mc{C}$, $\s(\mc{P})=\mc{P}$,  
and furthermore $(\s(C),\s(P))\in\mc{E}$ for any $(C,P)\in\mc{E}$.
Hence the $\mc{G}$-action on $\ovl{X}$ induces a $\mc{G}$-action on the graph $\mc{B}_X$ which preserves the colors of the vertices.

\begin{prop}\label{P:conic-pivot}
Let $C$ be an irreducible component of $\ovl{X}$ such that  $C\simeq \mathbb{P}^1_{{k_{\alg}}}$.
If $k(\pi(C))/k$ is not ruled, then $C$ is a $\mc G$-pivot vertex of $\mc{B}_X$.
\end{prop}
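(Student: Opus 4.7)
The plan is to unravel the pivot condition at $C$ by translating it into a statement about the degrees of closed points of a conic over the fixed field of the stabiliser of $C$. Let $\mc{H}=\{\rho\in\mc{G}\mid \rho(C)=C\}$ and let $\ell\subseteq k_\alg$ be its fixed field, so that $\mc{H}=\Gal(k_\alg/\ell)$. By \Cref{P:G-act-closed-subspace} one has $C=C_0\times_\ell k_\alg$ for $C_0=\pi_\ell(C)$, together with the identification $k(\pi(C))\simeq \ell(C_0)$.

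First I would identify $C_0$ geometrically. Since $C\simeq\mathbb{P}^1_{k_\alg}$, the $\ell$-curve $C_0$ is geometrically integral of arithmetic genus $0$. As $k$ is perfect, so is $\ell$, and hence $C_0$ is smooth over $\ell$; by \Cref{P:genus0=conic}, $C_0$ is a regular (in fact smooth) conic over $\ell$. Next I would unpack the hypothesis: by geometric integrality of $C_0$ over $\ell$, the field $\ell$ is algebraically closed in $\ell(C_0)\simeq k(\pi(C))$, so any ruled presentation $k(\pi(C))\simeq k'(t)$ over $k$ would force $k'=\ell$ and hence $C_0\simeq\mathbb{P}^1_\ell$. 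Conversely, if $C_0$ had a closed point of odd degree over $\ell$, then \Cref{algebro-conics-are-pivot} (using smoothness to force the rational point of Lemma's output to live over $\ell$ itself) would give $C_0\simeq \mathbb{P}^1_\ell$ and hence render $k(\pi(C))$ ruled. The non-ruled hypothesis therefore translates into: every closed point of $C_0$ has even degree over $\ell$.

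Finally I would compute the $\mc{H}$-orbit of a neighbour $P\in\mc{P}$ of $C$ in $\mc{B}_X$. Its image $P_0=\pi_\ell(P)$ is a closed point of $C_0$, and the fibre $\pi_\ell^{-1}(P_0)$ is contained in $\pi_\ell^{-1}(C_0)=C$. As $\ell$ is perfect, $\ell(P_0)/\ell$ is finite separable, so $\pi_\ell^{-1}(P_0)$ consists of exactly $[\ell(P_0):\ell]$ closed points, on which $\mc{H}=\Gal(k_\alg/\ell)$ acts transitively. Hence $|\mc{H}\cdot P|=[\ell(P_0):\ell]$, which is even by the previous step. Since every neighbour of $C$ in $\mc{B}_X$ arises this way, $C$ is a $\mc{G}$-pivot vertex.

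The main obstacle I anticipate is the dictionary in the second step: the non-ruled hypothesis concerns $k(\pi(C))/k$, whereas the tool \Cref{algebro-conics-are-pivot} operates over $\ell$, so one must carefully align the base of the ruled presentation with $\ell$ by way of the geometric integrality of $C_0$ provided by \Cref{P:G-act-closed-subspace}. Once that identification is pinned down, the orbit count in the final step is pure Galois descent of closed points along the base change from $\ell$ to $k_\alg$.
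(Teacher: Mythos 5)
Your proposal is correct and follows essentially the same route as the paper's proof: identify $C_0=\pi_\ell(C)$ as a regular conic over the fixed field $\ell$ of the stabiliser via \Cref{P:G-act-closed-subspace} and \Cref{P:genus0=conic}, use \Cref{algebro-conics-are-pivot} to convert the non-ruled hypothesis into the statement that every closed point of $C_0$ has even degree, and then identify $|\mc{H}\cdot P|$ with $[\ell(\pi_\ell(P)):\ell]$. The only (inessential) difference is that you derive the orbit-size formula directly from separability of $\ell(P_0)/\ell$ and transitivity of $\Gal(k_\alg/\ell)$ on the fibre, whereas the paper cites \cite[Theorem 4.2.3]{MO15} for this.
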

\begin{proof}
Let $\mc{H}$ be the stabilizer of $C$ under the $\mc{G}$-action  on $\mc{B}_X$, 
 let $\ell$ be the fixed field of $\mc{H}$  in $k_\alg$ and $C_0 = \pi_\ell(C)$. 
 Then  $C\simeq C_0\times_{\ell}{k_{\alg}}$ and $k(\pi(C))\simeq \ell(C_0)$,  by \Cref{P:G-act-closed-subspace}. 
Since ${C}\simeq\mathbb{P}_{{k_{\alg}}}^1$, we have $\gen(C_0/\ell)=\gen(C/k_\alg)=0$.
It follows  by \Cref{P:genus0=conic} that $C_0$ is a regular conic over $\ell$.

Suppose that $k(\pi(C))/k$ is not ruled. As $k(\pi(C))\simeq \ell(C_0)$, it follows that $C_0$ is not rational over $\ell$.
By \cref{algebro-conics-are-pivot},  all closed points of $C_0$ have even degree over $\ell$.
Let $P$ be some vertex adjacent to $C$. Hence $P$ is an intersection point of $C$ with some other irreducible component of $\ovl{X}$.
Denote $Q=\pi_{\ell}(P)$. 
From \cite[Theorem 4.2.3]{MO15}  we conclude that the  $\mc{H}$-orbit of $P$ has length $[\ell(Q):\ell]$, which is even. This shows that $C$ is a  $\mc{G}$-pivot vertex of $\mc{B}_X$.
\end{proof}

%%%%%%%%%%%%%
\section{The bound} %%%
%%%%%%%%%%%%%

Let $T$ be a discrete valuation ring, $K$ its fraction field and $k$ its residue field.
Let $\mc{X}$ denote an arithmetic surface over $T$.
We write $\mc{B}_{\mc X}$ (instead of $\mc{B}_{\mc{X}_s}$) to denote the bipartite intersection graph  of its special fiber $\mc{X}_s$. 
For two divisors $D$ and $E$ on $\mc{X}$ contained in $\mc{X}_s$, we denote by $D\cdot E$ the intersection number of $D$ and $E$ as defined in  \cite[Definition 9.1.15]{Liu}.

\begin{lem}\label{L:special-betti-genus-bound}
Assume that $k$ is algebraically closed. 
Let $\mc{X}$ be a relatively minimal arithmetic surface over $T$ whose special fiber $\mc{X}_s$ is connected.
Let $n,d_1,\dots,d_n$ be the positive integers and $\Gamma_1,\ldots,\Gamma_n$ the distinct irreducible components of $\mc{X}_s$ such that $\mc{X}_s=\sum_{i=1}^n d_i\Gamma_i$ as a divisor on $\mc{X}$.
Let $I=\{1\leq i\leq n\mid \Gamma_i^2=-1\}$,
$$\epsilon=\sum_{i\in I}(\gen(\Gamma_i/k)-\half)(d_i-1)\quad\mbox{  and }\quad \hat{\beta}_{\mc{X}}\,=\,1\,-\,n\,+\!\!\sum_{1\leq i<j\leq n}\!\! \Gamma_i\cdot\Gamma_j\,.$$ 
Then $\epsilon\geq 0$ and
$$\beta(\mc{B}_{\mc{X}})\quad\leq\quad\hat{\beta}_{\mc{X}}\quad\leq\quad \gen(\mc{X}_s/k)\,-\,\sum_{i=1}^n \gen(\Gamma_i/k)\,-\,\epsilon\,.$$
Moreover, we have $\beta(\mc{B}_{\mc{X}})=\hat{\beta}_{\mc{X}}$ if and only if $\deg(P)=2$ for all $P\in\mc{P}$ and $\Gamma_i\cdot \Gamma_j\leq 1$ for $1\leq i<j\leq n$.
\end{lem}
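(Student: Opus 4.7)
The lemma makes three assertions, which I treat in order: $\epsilon\geq 0$, the combinatorial bound $\beta(\mc B_{\mc X})\leq \hat\beta_{\mc X}$ with its equality characterisation, and the genus-theoretic bound.

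For the first assertion, it suffices to show $\gen(\Gamma_i/k)\geq 1$ for each $i\in I$. If $\gen(\Gamma_i/k)=0$, then \Cref{P:genus0=conic} together with $k$ being algebraically closed forces $\Gamma_i\simeq\mathbb{P}^1_k$; combined with $\Gamma_i^2=-1$, Castelnuovo's contractibility criterion for arithmetic surfaces (see \cite[\S9.3]{Liu}) allows $\Gamma_i$ to be blown down, contradicting relative minimality of $\mc X$. For the combinatorial inequality, set $\tau(P)=\deg(P)$ and $m(P)=\sum_{i<j,\,P\in\Gamma_i\cap\Gamma_j}i_P(\Gamma_i,\Gamma_j)$ for each $P\in\mc P$. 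Since $k=k_{\alg}$, we have $|\mc E|=\sum_P\tau(P)$ and $\sum_{i<j}\Gamma_i\cdot\Gamma_j=\sum_P m(P)$, so
\[
\hat\beta_{\mc X}-\beta(\mc B_{\mc X})=\sum_{P\in\mc P}\bigl(m(P)-\tau(P)+1\bigr)\geq\sum_P\binom{\tau(P)-1}{2}\geq 0,
\]
using $m(P)\geq\binom{\tau(P)}{2}$ (each of the $\binom{\tau(P)}{2}$ pairs crossing at $P$ contributes at least $1$). Equality requires $\tau(P)=2$ and $m(P)=1$ for every $P$, which matches the stated conditions $\deg(P)=2$ and $\Gamma_i\cdot\Gamma_j\leq 1$ for $i<j$.

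For the genus bound, adjunction on the regular surface $\mc X$ applied to $(\mc X_s)_{\mathrm{red}}=\sum_i\Gamma_i$, using $\Gamma_i\cdot K_{\mc X}=2\gen(\Gamma_i/k)-2-\Gamma_i^2$, yields the identity $\gen((\mc X_s)_{\mathrm{red}}/k)=\hat\beta_{\mc X}+\sum_i\gen(\Gamma_i/k)$, so it suffices to show $\gen(\mc X_s/k)-\gen((\mc X_s)_{\mathrm{red}}/k)\geq\epsilon$. Adjunction applied to $\mc X_s=\sum d_i\Gamma_i$, using $\mc X_s^2=0$ (as $\mc X_s$ is principal on $\mc X$) and the fibre relations $\sum_jd_j\Gamma_i\cdot\Gamma_j=0$, gives
\[
\gen(\mc X_s/k)-\gen((\mc X_s)_{\mathrm{red}}/k)=\sum_i(d_i-1)\bigl(\gen(\Gamma_i/k)-1\bigr)+\tfrac12\sum_{i<j}(d_i+d_j-2)\,\Gamma_i\cdot\Gamma_j.
\]
Writing $\rho_i=\sum_{j\neq i}\Gamma_i\cdot\Gamma_j$ and noting $\tfrac12\sum_{i<j}(d_i+d_j-2)\Gamma_i\cdot\Gamma_j=\tfrac12\sum_i(d_i-1)\rho_i$, the inequality rearranges as
\[
\sum_{i\notin I}(d_i-1)\bigl(\gen(\Gamma_i/k)-1+\tfrac{\rho_i}{2}\bigr)+\tfrac12\sum_{i\in I}(d_i-1)(\rho_i-1)\geq 0.
\]

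The $i\in I$ terms are non-negative because $\rho_i\geq 1$ (from $\Gamma_i\cdot\mc X_s=0$ with $\Gamma_i^2=-1$, which forces $\sum_{j\neq i}d_j\Gamma_i\cdot\Gamma_j=d_i\geq 1$), and the $i\notin I$ terms with $\gen(\Gamma_i/k)\geq 1$ are manifestly non-negative. The main obstacle is the residual case $i\notin I$ with $\gen(\Gamma_i/k)=0$: by the first step applied contrapositively, $\Gamma_i\simeq\mathbb{P}^1_k$ and $\Gamma_i^2\leq -2$, so $\sum_jd_j\Gamma_i\cdot\Gamma_j\geq 2d_i$, yet $\rho_i$ may drop below $2$ when $\Gamma_i$ meets a neighbour of strictly larger multiplicity. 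My plan for closing the estimate is to redistribute each such deficit along the intersection graph using the weighted fibre relations: every unit of excess of $d_j$ over $d_i$ across an edge with $\Gamma_i\cdot\Gamma_j>0$ produces a matching positive contribution at $\Gamma_j$ (either through $\gen(\Gamma_j/k)\geq 1$, or through $j\in I$ combined with the Castelnuovo-enhanced bound $\gen(\Gamma_j/k)\geq 1$, or through its own $\rho_j\geq 2$ arising from the fibre relation at $\Gamma_j$), and I expect a reorganisation indexed by ordered pairs of adjacent components to close the estimate.
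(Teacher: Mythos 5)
Your first two assertions (that $\epsilon\geq 0$ via Castelnuovo, and the bound $\beta(\mc B_{\mc X})\leq\hat\beta_{\mc X}$ via the computation $\hat\beta_{\mc X}-\beta(\mc B_{\mc X})=\sum_P(m(P)-\tau(P)+1)\geq 0$) follow the paper's argument closely and are correct. One small point: your equality characterisation ``$\tau(P)=2$ and $m(P)=1$ for every $P$'' is what actually falls out of the chain of inequalities (it says $\deg(P)=2$ everywhere and every intersection is transversal), and this is \emph{not} the same as the paper's stated condition ``$\deg(P)=2$ and $\Gamma_i\cdot\Gamma_j\leq 1$'': the latter additionally forbids two components from meeting transversally at more than one point. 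Your condition is the one that the argument actually supports; do not conflate them.

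The genus estimate, however, has a real gap, and you have named it yourself. Your identity
\[
\gen(\mc X_s/k)-\gen((\mc X_s)_{\mathrm{red}}/k)=\sum_i(d_i-1)\bigl(\gen(\Gamma_i/k)-1\bigr)+\tfrac12\sum_i(d_i-1)\rho_i
\]
is correct, as is the reduction of the desired inequality to
\[
\sum_{i\notin I}(d_i-1)\bigl(\gen(\Gamma_i/k)-1+\tfrac{\rho_i}{2}\bigr)+\tfrac12\sum_{i\in I}(d_i-1)(\rho_i-1)\geq 0.
\]
But this is a \emph{term-by-term} decomposition whose individual summands need not be nonnegative: if $i\notin I$, $\gen(\Gamma_i/k)=0$, $\Gamma_i^2\leq -2$, $d_i>1$ and $\rho_i=1$ (which the fibre relations do allow, forcing a neighbour of much larger multiplicity), the $i$-th term equals $-\tfrac12(d_i-1)<0$. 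The proposed ``redistribution along the intersection graph'' is stated only as a plan, with no invariant exhibited, no reorganised sum written down, and no verification that the matching positive contributions exist in all configurations; as it stands, the proof is not closed.

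The paper never puts itself in this position. It keeps the estimate global: starting from adjunction applied to $\mc X_s$ (using $\mc X_s\cdot\mc X_s=0$), it bounds $\sum_i d_i\Gamma_i^2$ from above using only $\Gamma_i^2\leq -1$ for $i\in I$ and $\Gamma_i^2\leq -2$ otherwise, obtaining
\[
\gen(\mc{X}_s/k)-\sum_i \gen(\Gamma_i/k)-1+n\ \geq\ -\tfrac12\sum_i\Gamma_i^2+\epsilon,
\]
and then applies the single global inequality $(\sum_i\Gamma_i)^2\leq 0$, i.e.\ $-\tfrac12\sum_i\Gamma_i^2\geq\sum_{i<j}\Gamma_i\cdot\Gamma_j$, to finish. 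Your reformulation through $\gen((\mc X_s)_{\mathrm{red}}/k)$ converts exactly this last inequality into an \emph{equality} (it is hidden inside the adjunction computation for $\sum_i\Gamma_i$), which is elegant but also consumes the slack that the paper exploits, leaving you with the unresolved pointwise problem. To complete your version you would either have to carry out the redistribution argument in full, or step back and run the estimate globally as the paper does, using $(\sum_i\Gamma_i)^2\leq 0$ at the end rather than baking it into an identity at the start.
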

\begin{proof}
Since $\mc{B}_{\mc{X}}$ is a bipartite graph with disjoint sets of vertices $\mc{P}$ and $\mc{C}$,
 we have
$$\beta(\mc{B}_{\mc{X}})=\Big(\sum_{P\in\mc{P}} \deg(P)\Big)-|\mc{P}|-|\mc{C}|+1\,\,.$$
The elements of $\mc{C}$ are given by the irreducible components 
$\Gamma_1,\dots,\Gamma_n$ of $\mc{X}_s$.
Hence $|\mc{C}|=n$ and 
$$\beta(\mc{B}_{\mc{X}})=1-n+\sum_{P\in\mc{P}} (\deg(P)-1)\,.$$
The total number of triples $(P,i,j)$ where $1\leq i<j\leq n$ and $P\in\mc{P}$ lies on both components $\Gamma_i$ and $\Gamma_j$ is given by $\sum_{P\in\mc{P}} \binom{\deg(P)}{2}$.
Since $\deg(P)\geq 2$ for all $P\in\mc{P}$, we obtain that
\begin{eqnarray}\label{IE:intersect}
\sum_{P\in\mc{P}} (\deg(P)-1) \quad\leq \quad  \sum_{P\in\mc{P}} \binom{\deg(P)}{2} & \leq & 
\sum_{1\leq i<j\leq n}\Gamma_i\cdot\Gamma_j \,.
\end{eqnarray}
This shows the first inequality in the statement.
Furthermore, the two inequalities in \eqref{IE:intersect} are both equalities if and only if $\deg(P)=2$ for all $P\in\mc{P}$ and $\Gamma_i\cdot \Gamma_j\leq 1$ for all $1\leq i<j\leq n$.

By Castelnuovo's criterion \cite[Theorem 9.3.8]{Liu}, since $\mc{X}$ is relatively minimal we have $\gen(\Gamma_i/k)\geq 1$ for every $i\in I$. This yields that $\epsilon\geq 0$.
From the adjunction formula \cite[Theorem~9.1.37]{Liu} applied to $\mc{X}_s$ and to $\Gamma_1,\dots,\Gamma_n$ we obtain that 
\begin{eqnarray*}
 2\gen(\mc{X}_s/k)-2-\sum_{i=1}^nd_i\cdot (2\gen(\Gamma_i/k)-2)  & = & \mc{X}_s\cdot\mc{X}_s-\sum_{i=1}^n d_i\Gamma_i^2\,.
\end{eqnarray*}
By \cite[Proposition~9.1.21]{Liu} we have $\mc{X}_s\cdot\mc{X}_s=0$. 
Hence 
\begin{eqnarray*}
\gen(\mc{X}_s/k)-\sum_{i=1}^nd_i \gen(\Gamma_i/k) -1+n & = & -\half\sum_{i=1}^nd_i\Gamma_i^2 + \sum_{i=1}^n(1-d_i)\,.
\end{eqnarray*}
For $1\leq i\leq n$, we have $\Gamma_i^2=-1$ if $i\in I$ and $\Gamma_i^2\leq -2$ otherwise.
Therefore
\begin{eqnarray*}
\label{IE2}
2\sum_{i=1}^n(1-d_i)-\sum_{i\in I}(1-d_i) & \geq & \sum_{i=1}^n (1-d_i)(-\Gamma_i^2)\,\,=\,\,\sum_{i=1}^nd_i\Gamma_i^2 -\sum_{i=1}^n\Gamma_i^2\,.
\end{eqnarray*}
It follows that
\begin{eqnarray*}
\gen(\mc{X}_s/k)-\sum_{i=1}^nd_i \gen(\Gamma_i/k) -1+n & \geq & - \half  \sum_{i=1}^n \Gamma_i^2  +\half   \sum_{i\in I}(1-d_i)\,.
\end{eqnarray*}
Since 
$\half  \sum_{i\in I}(1-d_i) = \epsilon - \sum_{i\in I}(d_i-1) \gen(\Gamma_i/k)\,,$
we obtain that
\begin{eqnarray*}
\gen(\mc{X}_s/k)-\sum_{i=1}^n \gen(\Gamma_i/k) -1+n & \geq & - \half \sum_{i=1}^n \Gamma_i^2  +\epsilon\,.
\end{eqnarray*}
By \cite[Theorem 9.1.23]{Liu},
we have
\begin{equation*}
\sum_{i=1}^n\Gamma_i^2+2\!\!\!\!\sum_{1\leq i<j\leq n}\!\!\!\!\Gamma_i\cdot\Gamma_j = \left(\sum_{i=1}^n\Gamma_i\right)^{\!2} \,\, \leq \,\, 0\,.\label{IE3}
\end{equation*}
Hence we obtain the second inequality in the statement.
\end{proof}

\begin{rem} 
The quantity $\hat{\beta}_{\mc{X}}$ appearing in \Cref{L:special-betti-genus-bound} is the Betti number of the dual graph of the reduction type of $\mc{X}$ as defined in \cite[Definition 10.1.48]{Liu}.
Under the assumption that $\mc{X}_s$ is reduced, 
 it is shown in \cite[Proposition 10.1.51]{Liu} that
$$\hat{\beta}_{\mc{X}}\quad=\quad \gen(\mc{X}_s/k)\,-\,\sum_{i=1}^n \gen(\Gamma_i/k)\,,$$
which permits a short-cut in the proof of \Cref{L:special-betti-genus-bound} in this case. 
 \end{rem}

\begin{thm}\label{T:main-bound}
Assume that $k$ is perfect. Let $F/K$ be a function field in one variable such that $K$ is relatively algebraically closed in $F$.
For $v\in\Omega_T(F)$ let $\ell_v$ be the relative algebraic closure of $k$ in $\kappa_v$.
Set $\Omega_T^{\circ}(F)=\{v\in\Omega_T^\ast(F)\mid \gen(\kappa_v/k)=0\}$.
Then 
\begin{eqnarray}
|\Omega_T^\ast(F)|\,\,\,\leq &  |\Omega_T^{\circ}(F)|\,\,+\!\!\!\!\sum\limits_{v\in\Omega_T^{\ast}(F)}[\ell_v:k]\cdot \gen(\kappa_v/k) \,\,\leq & 1+ \gen(F/K).\quad\label{IE:statement}
\end{eqnarray}
Moreover, if $|\Omega_T^\ast(F)| = \gen(F/K) + 1 $, then $\Omega_T^\ast(F)=\Omega_T^\circ(F)$ and 
$T$ is not totally ramified in the residue field extension $\kappa_v/K$ for any $v\in\Omega(F)$ with $K\subseteq\mc{O}_v$.
\end{thm}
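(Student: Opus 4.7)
The strategy is to model $F/K$ by an arithmetic surface and combine \cref{L:special-betti-genus-bound} with \cref{C:pivot-orbit-bound-opt}. The first inequality in \eqref{IE:statement} is immediate, since each $v\in\Omega_T^\ast(F)\setminus\Omega_T^\circ(F)$ contributes $[\ell_v:k]\gen(\kappa_v/k)\geq 1$ to the middle sum; the work lies in the second inequality and in the equality analysis.

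First I reduce via \cref{L:reduct-to-smooth} to a finite purely inseparable extension $K'/K$ over which $FK'$ is the function field of a geometrically integral smooth projective curve. The integral closure $T'$ of $T$ in $K'$ has residue field $k$ (by perfectness), and the restriction map $\Omega_{T'}(FK')\to \Omega_T(F)$ is a bijection preserving $\Omega^\ast,\Omega^\circ$, the constant fields $\ell_v$ and the genera $\gen(\kappa_v/k),\gen(F/K)$. Fix a relatively minimal arithmetic surface $\mc{X}$ over $T'$ with function field $FK'$ and set $\mc{X}^{sh}=\mc{X}\times_{T'}T^{sh}$, with $T^{sh}$ the strict Henselization of $T'$ (residue field $k_{\alg}$). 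By \cref{L:model-unramif-ext}, either $\mc{X}^{sh}$ is again relatively minimal, or $\mc{X}_s$ is integral and ruled of genus zero; in the latter case $\gen(F/K)=0$ and $\Omega_T^\ast(F)=\emptyset$, so the bound holds trivially.

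In the main case $(\mc{X}^{sh})_s=\ovl{\mc{X}_s}$, the bipartite intersection graph $\mc{B}_{\mc{X}_s}$ carries the natural $\mc{G}$-action, and $\gen(\mc{X}_s/k)=\gen(F/K)$ by \cite[Corollary~8.3.6]{Liu}. By \cite[Proposition~3.7]{BGVG}, every $v\in\Omega_T^\ast(F)$ is centered at the generic point of a component $\Gamma_v$ of $\mc{X}_s$; the $[\ell_v:k]$ geometric components above $\Gamma_v$ form one $\mc{G}$-orbit of common arithmetic genus $\gen(\Gamma_v/\ell_v)\geq \gen(\kappa_v/k)$ by \cref{P:Deu-gen}. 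When $v\in\Omega_T^\circ(F)$ and $\Gamma_v$ is regular (so $\gen(\Gamma_v/\ell_v)=0$), the components above $\Gamma_v$ are copies of $\mathbb{P}^1_{k_{\alg}}$ and hence $\mc{G}$-pivot vertices by \cref{P:conic-pivot}; when $\Gamma_v$ is not regular, $\gen(\Gamma_v/\ell_v)\geq 1$ contributes $[\ell_v:k]\gen(\Gamma_v/\ell_v)\geq [\ell_v:k]\geq 1$ to $\sum_i\gen(\Gamma_i/k_{\alg})$ in excess of $[\ell_v:k]\gen(\kappa_v/k)=0$. Writing $N$ for the number of $\mc{G}$-orbits of pivot vertices in $\mc{B}_{\mc{X}_s}$, these observations give
\[
|\Omega_T^\circ(F)|+\sum_{v\in\Omega_T^\ast(F)}[\ell_v:k]\gen(\kappa_v/k)\,\leq\, N+\sum_{i}\gen(\Gamma_i/k_{\alg}),
\]
and applying \cref{C:pivot-orbit-bound-opt} together with \cref{L:special-betti-genus-bound} to the right-hand side yields
\[
N+\sum_{i}\gen(\Gamma_i/k_{\alg})\,\leq\,\beta(\mc{B}_{\mc{X}_s})+1+\sum_{i}\gen(\Gamma_i/k_{\alg})\,\leq\,\gen(F/K)+1-\epsilon,
\]
with $\epsilon\geq 0$, thereby establishing the second inequality.

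For the moreover statement, the hypothesis $|\Omega_T^\ast(F)|=\gen(F/K)+1$ forces equality at every step, so $\epsilon=0$, every $\Gamma_v$ is regular, and $[\ell_v:k]=\gen(\kappa_v/k)=1$ for each $v\in\Omega_T^\ast(F)\setminus\Omega_T^\circ(F)$; any such $v$ would then yield a smooth $\mc{G}$-fixed elliptic curve $\Gamma_v$ in $\ovl{\mc{X}_s}$. The equality $N=\beta(\mc{B}_{\mc{X}_s})+1$ in \cref{C:pivot-orbit-bound-opt} splits into two cases, both incompatible with such a $\Gamma_v$: case (a) would force pivots to coincide with $\mc{G}$-fixed vertices, making $\Gamma_v$ contribute an extra pivot orbit beyond $\Omega_T^\circ(F)$ and violating $N=|\Omega_T^\circ(F)|$, while case (b) rules out any $\mc{G}$-fixed vertex. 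Hence $\Omega_T^\ast(F)=\Omega_T^\circ(F)$. The non-total-ramification claim follows analogously: if $T$ were totally ramified in $\kappa_v/K$ for some closed point $v$ of $\mc{X}_K$, then blowing up the specialisation of $v$ on $\mc{X}^{sh}$ would introduce an additional exceptional component in the special fiber, strictly enlarging $\beta(\mc{B}_{\mc{X}_s})$ and breaking the equality chain. The principal technical hurdle is this interlocking equality analysis, which requires combining the equality cases of \cref{L:special-betti-genus-bound} and \cref{C:pivot-orbit-bound-opt} with the $\mc{G}$-action on $\mc{B}_{\mc{X}_s}$ and the impact of total ramification on the special fiber geometry.
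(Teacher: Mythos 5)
Your overall structure for the main inequality matches the paper's: reduce via \cref{L:reduct-to-smooth}, pass to a relatively minimal arithmetic model, base change to a maximal unramified/strictly henselian extension, identify the valuations in $\Omega_T^\ast(F)$ with non-ruled components of the special fiber via \cite[Proposition 3.7]{BGVG}, and combine \cref{C:pivot-orbit-bound-opt} with \cref{L:special-betti-genus-bound}. Your intermediate estimate
$|\Omega_T^\circ(F)|+\sum_v[\ell_v:k]\gen(\kappa_v/k)\leq N+\sum_i\gen(\Gamma_i/k_\alg)$ is correct, and I checked it reproduces the paper's chain of inequalities once the paper's bookkeeping with singular rational components (the set $\ovl{\mc{C}}_\oplus$) is unpacked. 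That bookkeeping is in fact the delicate step --- your phrase ``contributes in excess of'' is glossing over the verification that each singular rational non-ruled component genuinely supplies at least $1$ to $\sum_i\gen(\Gamma_i/k_\alg)$ while supplying $0$ to $[\ell_v:k]\gen(\kappa_v/k)$ --- but the idea is right.

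The argument for $\Omega_T^\ast(F)=\Omega_T^\circ(F)$ under equality is also essentially the paper's: both cases of \cref{C:pivot-orbit-bound-opt} force every $\mc{G}$-fixed vertex to lie in the set of geometric components of genus zero, while the equalities force the positive-genus non-ruled components to be $\mc{G}$-fixed, which is a contradiction. One caveat: your step ``$\epsilon=0$, every $\Gamma_v$ is regular'' is not justified as stated --- $\epsilon=0$ only controls the multiplicities $d_i$ of $(-1)$-curves, not regularity of the components; regularity comes instead from equality in the genus-comparison inequality (the analogue of the paper's \eqref{IE:male}).

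The genuine gap is the non-total-ramification claim. Your blow-up argument does not work. Blowing up a closed point of a regular surface (whether a regular point of the special fiber or an intersection of two components) does \emph{not} change the Betti number of the intersection graph: in either case one adds one cyan vertex and two edges while the number of pink vertices changes by one, so $\beta(\mc{B})$ is invariant. Moreover, after a blow-up the model is no longer relatively minimal, so \cref{L:special-betti-genus-bound} no longer applies, and $|\Omega_T^\ast(F)|$ is intrinsic to $F$ and unchanged by a choice of model; there is no ``equality chain'' to break. The paper's actual argument is different and must be supplied: first show from the equality analysis (all $\mc{G}$-fixed vertices lie in $\ovl{\mc{C}}_\varocircle$, and those components are nonsplit conics) that the special fiber $\mc{X}_s$ has \emph{no $k$-rational point}; then observe that if $T$ were totally ramified in $\kappa_v/K$, the discrete valuation ring of $\kappa_v$ above $T$ with residue field $k$ would be centered at a closed point of the arithmetic closure $D=\overline{\{v\}}$ inside $\mc{X}_s$, whose residue field embeds into $k$, producing a $k$-rational point of $\mc{X}_s$ --- a contradiction. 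You should replace the blow-up paragraph with this argument.
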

\begin{proof}
As $\gen(\kappa_v/k)\geq 1$ for any $v\in\Omega_T^\ast(F)\setminus\Omega_T^{\circ}(F)$,
the first inequality in \eqref{IE:statement} is obvious.
To prove the other parts, we first reduce to the case where $F$ is the function field of a geometrically integral smooth projective curve over $K$.

By \Cref{L:reduct-to-smooth},
there exists a finite purely inseparable extension $K'/K$ and a function field in one variable $F'/K'$ containing $F$ with $F'=K'F$ and such that $F'$ is the function field of  a geometrically connected smooth projective curve $C$ over $K'$. Then $C$ is geometrically integral and $\gen(F'/K')=\gen(C/K')$, by \Cref{P:Deu-gen}.
We shall see that it is sufficient to prove the statement for the situation where $K'=K$ (which is certainly already the case when $\car(K)=0$).

Since $K'/K$ is a finite purely inseparable extension, there exists a unique discrete valuation ring $T'$ of $K'$ with $T=T'\cap K$. 
Since the residue field $k$ of $T$ is perfect and the extension $K'/K$ is purely inseparable, $k$ is also the residue field of $T'$.
By \cite[Chapter IV, \S 38, Theorem 3]{Deu}, we have $\gen(F'/K')\leq \gen(F/K)$.
 
Since $F'/F$ is a finite purely inseparable extension, for every $v\in \Omega_T(F)$ 
there exists a unique valuation $v'\in\Omega_{T'}(F')$ with $\mc{O}_{v'}\cap F=\mc{O}_v$.
We now consider $v\in\Omega_T(F)$ and look at the residue fields $\kappa_v$ and $\kappa_{v'}$.
Since $k$ is perfect and $\kappa_{v'}/\kappa_v$ is a finite purely inseparable extension, it follows by \cite[Proposition 3.10.2~(c)]{Stichtenoth} that  $\gen(\kappa_{v'}/k)=\gen(\kappa_v/k)$ and that $\kappa_{v'}/k$ is ruled if and only if $\kappa_v/k$ is ruled.

As $F'=K'F$ we have $[F':F]\leq [K':K]$.
The residue degree of any extension of a valuation from $F$ to $F'$ is bounded by $[F':F]$.
Since $K'$ embeds into the residue field of every extension to $F'$ of a valuation on $F$ trivial on $K$, it follows that any $K$-rational valuation on $F$ extends to a $K'$-rational valuation on $F'$.

Hence to prove the statement we may now assume without loss of generality that $K'=K$ and $F'=F$, so that
$F$ is the function field of the geometrically integral smooth projective curve $C$ defined over $K$.

Let $\mc{G}$ denote the absolute Galois group of $k$.
By \cite[Proposition 10.1.8]{Liu}, there exists 
a relatively minimal arithmetic surface $\mc{X}$ over $T$ whose generic fiber is $K$-isomorphic to $C$.
By
 \cite[Corollary 8.3.6]{Liu}, the special fiber $\mc{X}_s$ of $\mc{X}$ is geometrically connected and $\gen(\mc{X}_s/k)=\gen(C/K)=\gen(F/K)$.
Let $\pi:\mc{X}_s\times_k{k_{\alg}}\to \mc{X}_s$ denote the morphism of 
the base change from $k$ to $k_\alg$.
Let $\mc{C}$ and $\ovl{\mc{C}}$ denote the sets of irreducible components of $\mc{X}_s$ and $\mc{X}_s\times_k{k_{\alg}}$, respectively.
By \cite[Theorem 4.2.4]{MO15}, the morphism $\pi$ induces a bijection
 between $\mc{C}$ and the set of $\mc{G}$-orbits in $\ovl{\mc{C}}$.

The core of the argument consists in combining estimates and counting, and it relies on a careful distinction of different types of components of $\mc{X}_s$. A particular attention  needs to be paid to the components which are rational and of positive genus (so in particular singular) --- they have to be ignored in a considerate way.
To this aim we fix the following notation:
\begin{eqnarray*}
\mc{C}_\varoast & = & \{\Gamma\in\mc{C}\mid k(\Gamma)/k\mbox{ not ruled}\}\\
\ovl{\mc{C}}_\varoast & = & \{\Gamma\in\ovl{\mc{C}}\mid \pi(\Gamma)\in\mc{C}_\varoast\}\\
\ovl{\mc{C}}_{\varocircle} & = & \{\Gamma\in\ovl{\mc{C}}_\varoast\mid  \gen(\Gamma/{k_{\alg}})=0\}\\
\ovl{\mc{C}}_{\oplus} & = & \{\Gamma\in \ovl{\mc{C}}\mid \gen(\Gamma/{k_{\alg}})>\gen({k_{\alg}}(\Gamma)/{k_{\alg}})=0\}\\
\mc{C}_{\varocircle} & = & \{\pi(\Gamma)\mid\Gamma\in\ovl{\mc{C}}_{\varocircle}\}\\
\mc{C}_{\oplus} & = & \{\pi(\Gamma)\mid \Gamma\in\ovl{\mc{C}}_{\oplus}\}
\end{eqnarray*}

By \cite[Proposition 3.7]{BGVG}, every valuation in $v\in\Omega_T^{\ast}(F)$ is centred in the generic point of a unique irreducible component of $\mc{X}_s$, which we denote by $\Gamma_v$, whereby $k(\Gamma_v)=\kappa_v$ and  $\Gamma_v \in\mc{C}_\varoast$. In particular $$|\Omega^{\ast}_T(F)|=|\mc{C}_\varoast|\,.$$

Consider $v\in \Omega_T^\ast(F)$. 
We denote by $\Gamma'_v$ the normalization of~$\Gamma_v$. Note that $\Gamma'_v$ is a projective curve over $\ell_v$ which is $k$-birational  to $\Gamma_v$.
We obtain that $\kappa_v=k(\Gamma_v)=k(\Gamma'_v)=\ell_v(\Gamma'_v)$ and $\ovl{\Gamma}'_v=\Gamma'_v \times_{\ell_v}{k_{\alg}}$ is a connected smooth projective curve over ${k_{\alg}}$ such that $\Gamma_v \times_{k}{k_{\alg}}$ is $k_\alg$-birational to $\ovl{\Gamma}'^{[\ell_v:k]}_v$.
Then $\gen(\kappa_v/k)= \gen(\Gamma'_v/\ell_v)= \gen(\ovl{\Gamma}'_v/{k_{\alg}})=\gen({k_{\alg}}(\ovl{\Gamma}'_v)/{k_{\alg}})$, in view of \Cref{P:Deu-gen}.
Let $\Gamma_0$ be an irreducible component of $\Gamma_v\times_k {k_{\alg}}$.
Then $\Gamma_v$ corresponds to the ${\mc G}$-orbit $\mc{G}\Gamma_0$ in $\ovl{\mc{C}}_\varoast$,
$|\mc{G}\Gamma_0|=[\ell_v:k]$ and $\ovl{\Gamma}'_v$ is the normalization of $\Gamma_0$.
It follows that $\gen(\Gamma_0/{k_{\alg}})\geq \gen(\ovl{\Gamma}'_v/{k_{\alg}}) = \gen(\kappa_v/k)$ and $\gen(\Gamma/k_\alg)=\gen(\Gamma_0/k_\alg)$ for all $\Gamma\in \mc{G}\Gamma_0$. 
Therefore 
\begin{equation*}
[\ell_v:k]\cdot\gen(\kappa_v/k)\,\,\leq \,\, [\ell_v:k]\cdot \gen(\Gamma_0/{k_{\alg}})\,\,=\sum_{\Gamma\in \mc{G}\Gamma_0}\gen(\Gamma/{k_{\alg}})\,,
\end{equation*}
and furthermore $\Gamma_0\in\ovl{\mc{C}}_{\varocircle}\cup\ovl{\mc{C}}_{\oplus}$ if and only if $\gen(\kappa_v/k)=0$.
This shows that 
\begin{eqnarray}
\label{IE:male}
\sum_{v\in\Omega_T^\ast(F)}[\ell_v:k]\cdot \gen(\kappa_v/k) & \leq & 
\sum_{\Gamma\in \ovl{\mc C}\setminus\ovl{\mc C}_{\oplus}} \gen(\Gamma/{k_{\alg}})\quad\mbox{and}\qquad\,\\
\label{E:genzero}
|\Omega_T^{\circ}(F)|\quad  & = & \quad  |\mc{C}_{\varocircle}| + | \mc{C}_{\oplus}|\,.
\end{eqnarray}

Let $T'$ now denote a maximal unramified extension of $T$ inside an algebraic closure of $K$.
The residue field of $T'$ is an algebraic closure of $k$, which we identify with ${k_{\alg}}$.
By \Cref{L:model-unramif-ext}, $\mc{X}'=\mc{X} \times_T T'$ is an arithmetic surface over $T'$ whose special fiber $\mc{X}'_s$ is $k_\alg$-isomorphic to
$\mc{X}_s \times_k {k_{\alg}}$. 
Therefore $\mc{B}_{\mc{X}'}= \mc{B}_{\mc{X}}$.

We assume now that $\Omega_T^\ast(F)\neq\emptyset$, since otherwise there is nothing to prove.
Then $\mc{X}_s$ has an irreducible component which is not ruled.
As $C$ is geometrically connected, we have by \Cref{L:model-unramif-ext} that $\mc{X}'$ is relatively minimal over $T'$.

Since every component $\Gamma$ of $\mc{X}_s\times_k k_\alg$ is an integral curve over $k_\alg$, whereby $\gen(\Gamma/k_\alg)\geq 0$, and since $\gen(\mc{X}_s\times_k k_\alg/{k_{\alg}})  = \gen(\mc{X}_s/k)=\gen(C/K)=\gen(F/K)$,  
\Cref{L:special-betti-genus-bound} yields that  
\begin{eqnarray}\label{IE:betti-bound}
\beta(\mc{B}_{\mc{X}})  & \leq & \gen(F/K) - \sum_{\Gamma\in \ovl{\mc{C}}}\gen(\Gamma/{k_{\alg}})\,.
\end{eqnarray}

Let $\mc{W}$ be the set of $\mc{G}$-pivot vertices in $\mc{B}_{\mc X}$ 
and let $N$ be the number of $\mc{G}$-orbits in~$\mc{W}$.
By \Cref{C:pivot-orbit-bound-opt}, we have $N\leq \beta(\mc{B}_{\mc{X}})+1$.
By \Cref{P:conic-pivot},
every element of $\mc{C}_{\varocircle}$ corresponds to a $\mc G$-orbit in $\mc{W}$. 
Hence $\ovl{\mc{C}}_{\varocircle}\subseteq \mc{W}$ and 
\begin{eqnarray}\label{IE:pivot-bound}
|\mc{C}_{\varocircle}|\,\,\leq\,\, N & \leq &  \beta(\mc{B}_{\mc{X}})+1\,.
\end{eqnarray}
Since $\gen(\Gamma/k_\alg)\geq 1$ for any $\Gamma\in\ovl{\mc{C}}_{\oplus}$, we further have
\begin{eqnarray}\label{IE:C+}
|\mc{C}_\oplus|\,\,\leq \,\, |\ovl{\mc{C}}_\oplus| & \leq & \sum_{\Gamma\in\ovl{\mc{C}}_\oplus}\gen(\Gamma/k_\alg). 
\end{eqnarray}
By combining \eqref{IE:male}, \eqref{IE:betti-bound}, \eqref{IE:pivot-bound} and \eqref{IE:C+}, we obtain that 
\begin{eqnarray}\label{IE:almost}
|\Omega_T^{\circ}(F)| & \leq  & \gen(F/K)  + 1  -\!\! \sum_{\Gamma\in \ovl{\mc{C}}\setminus\ovl{\mc{C}}_\oplus}\gen(\Gamma/{k_{\alg}})\,,
\end{eqnarray}
which along with \eqref{IE:male} proves the second inequality in \eqref{IE:statement}.

Assume now that 
\begin{eqnarray}\label{E:boundattained}
|\Omega_T^\ast(F)|=\gen(F/K)+1\,.
\end{eqnarray}
Hence we have equalities on both sides in \eqref{IE:statement}. It follows that we have equality in \eqref{IE:almost}, and hence also everywhere in \eqref{IE:male}, \eqref{IE:betti-bound}, \eqref{IE:pivot-bound} and \eqref{IE:C+}.

In particular $N =  \beta(\mc{B}_{\mc{X}})+1$, which implies that $\mc{W}=\ovl{\mc{C}}_{\varocircle}$ and further, by \Cref{C:pivot-orbit-bound-opt}, that 
all $\mc{G}$-fixed vertices of $\mc{B}_{\mc{X}}$ belong to $\mc{W}$.
Therefore all $\mc{G}$-fixed vertices of $\mc{B}_{\mc{X}}$ belong to $\ovl{\mc{C}}_{\varocircle}$.

However, the equality $|\mc{C}_\varoplus|=|\ovl{\mc{C}}_\varoplus|$ in \eqref{IE:C+} implies that $\ovl{\mc{C}}_{\varoplus}$ consists of $\mc{G}$-fixed vertices of $\mc{B}_{\mc{X}}$.
Hence $\ovl{\mc{C}}_{\varoplus}=\emptyset$.

It follows that $\ovl{\mc{C}}_\varoast\setminus\ovl{\mc{C}}_\varocircle=\{\Gamma\in\ovl{\mc{C}}\mid \gen(\Gamma/k_\alg)>0\}$.
Combining \eqref{E:boundattained} with the equalities in \eqref{IE:betti-bound} and \eqref{IE:pivot-bound}, we obtain that 
\begin{eqnarray*}
|\mc{C}_{\varocircle}| + \sum_{\Gamma\in \ovl{\mc{C}}_\varoast\setminus\ovl{\mc{C}}_\varocircle}\gen(\Gamma/{k_{\alg}}) & = & \gen(F/K)+1=|\Omega_T^\ast(F)|=|\mc{C}_{\varoast}|\,.
\end{eqnarray*}
Since clearly
$$|\mc{C}_\varoast|-|\mc{C}_\varocircle|= |\mc{C}_\varoast\setminus\mc{C}_\varocircle|\leq |\ovl{\mc{C}}_\varoast\setminus\ovl{\mc{C}}_\varocircle
|\leq \sum_{\Gamma\in \ovl{\mc{C}}_\varoast\setminus\ovl{\mc{C}}_\varocircle}\gen(\Gamma/{k_{\alg}})\,,$$
we obtain that $|\mc{C}_\varoast\setminus\mc{C}_\varocircle|= |\ovl{\mc{C}}_\varoast\setminus\ovl{\mc{C}}_\varocircle|$. 
Hence $\ovl{\mc{C}}_\varoast\setminus\ovl{\mc{C}}_\varocircle$ consists of $\mc{G}$-fixed vertices of $\mc{B}_{\mc{X}}$. Since all $\mc{G}$-fixed vertices of $\mc{B}_{\mc{X}}$ belong to $\ovl{\mc{C}}_\varocircle$, we conclude that $\ovl{\mc{C}}_\varoast=\ovl{\mc{C}}_\varocircle$.
This yields that ${\mc{C}}_\varoast={\mc{C}}_\varocircle$ and hence $\Omega_T^\ast(F)=\Omega_T^\circ(F)$.

Suppose now that there exists a $k$-rational point $P$ on $\mc{X}_s$.
Then the fiber $\pi^{-1}(P)$ consists of a single point $Q$ on $\mc{X}_s\times_k k_\alg$ that is fixed by the action of $\mc{G}$.
If $Q$ is an intersection point of two components of $\mc{X}_s\times_k k_\alg$, then $Q$ is a pink vertex of $\mc{B}_{\mc{X}}$ fixed by $\mc{G}$, which contradicts the fact that all $\mc{G}$-fixed vertices of $\mc{B}_{\mc{X}}$ belong to $\ovl{\mc{C}}_{\varocircle}$ and hence are cyan.
Suppose now that $Q$ lies on a single component of $\mc{X}_s\times_k k_\alg$. 
Then $P$ lies on a single component $\Gamma$ of $\mc{X}_s$, and it follows that $\ovl{\Gamma}=\Gamma\times_k k_\alg$ is the component of $\mc{X}_s\times_k k_\alg$ containing $Q$. 
We obtain that $\ovl{\Gamma}$ is a $\mc{G}$-fixed vertex of $\mc{B}_{\mc{X}}$.
Since all $\mc{G}$-fixed vertices of $\mc{B}_{\mc{X}}$ belong to $\ovl{\mc{C}}_\varocircle$, it follows that $\gen(\ovl{\Gamma}/k_\alg)=0$, whereby $\ovl{\Gamma}$ is $k_\alg$-isomorphic to $\mathbb{P}_{k_\alg}^1$.
It follows that $\Gamma$ is a smooth curve of genus zero over $k$.
Since $\Gamma$ contains a $k$-rational point, we conclude that $\Gamma$ is rational over $k$, whereby $k(\Gamma)/k$ is ruled. This contradicts the fact that $\Gamma\in\mc{C}_\varocircle\subseteq\mc{C}_{\varoast}$.
This shows that $\mc{X}_s$ contains no $k$-rational point.

Consider now a valuation $v\in\Omega(F)$ such that $K\subseteq\mc{O}_v$ and suppose for the sake of a contradiction that $T$ is totally ramified in $\kappa_v$.
Hence there exists a unique discrete valuation ring $T'$ of $\kappa_v$ such that $T'\cap K=T$ and the 
residue field of $T'$ is equal to $k$.
The valuation $v$ is centred in point of the generic fiber of $\mc{X}$.
The closure of this point in $\mc{X}$ is a prime divisor $D$ of $\mc{X}$, and $D$ is an integral projective scheme over $T$ with function field $\kappa_v$.
Hence $T'$ is centred in a closed point $P$ of $D$.
Then $P$ is a point on the special fiber $\mc{X}_s$ and the residue field of $P$ is contained in the residue field of $T'$, which is $k$. 
This contradicts the observation above that $\mc{X}_s$ contains no $k$-rational point.
 \end{proof}

%%%%%%%%%%%%%%%%%%%%%%%%%%%%%%%%%%%%%%%%%%%%%%%%%%%%%%%%%%
%%%%%%%%%%%%%%%%%%%%%%%%%%%%%%%%%%%%%%%%%%%%%%%%%%%%%%%%%%
\bibliographystyle{plain}

\end{document}